\newtheorem{theorem}{Theorem}[section]
\newtheorem{lemma}[theorem]{Lemma}
\newtheorem{corollary}[theorem]{Corollary}
\newtheorem{algo}{Algorithm}[section]
\theoremstyle{remark}
\newtheorem{remark}[theorem]{Remark}
\begin{document}

\title[]
{On the continuous time limit of the Ensemble Kalman Filter}
\author{Theresa Lange, Wilhelm Stannat}
\address{%
Institut f\"ur Mathematik\\
Technische Universit\"at Berlin \\
Stra{\ss}e des 17. Juni 136\\
D-10623 Berlin\\
and\\
Bernstein Center for Computational Neuroscience\\
Philippstr. 13\\
D-10115 Berlin\\
Germany}
\email{tlange@math.tu-berlin.de, stannat@math.tu-berlin.de}

\begin{abstract}
We present recent results on the existence of a continuous time limit for Ensemble Kalman Filter algorithms. In the setting of continuous signal and observation processes, we apply the original Ensemble Kalman Filter algorithm proposed by \cite{burgers1998} as well as a recent variant \cite{deWiljes2017} to the respective discretizations and show that in the limit of decreasing stepsize the filter equations converge to an ensemble of interacting (stochastic) differential equations in the ensemble-mean-square sense. Our analysis also allows for the derivation of convergence rates with respect to the stepsize.\\
An application of our analysis is the rigorous derivation of continuous ensemble filtering algorithms consistent with discrete approximation schemes. Conversely, the continuous time limit allows for a better qualitative and quantitative analysis of the time-discrete counterparts using the rich theory of dynamical systems in continuous time.
\end{abstract}

\keywords{Ensemble Kalman Filter, Continuous time limit}
\subjclass[2010]{60H35, 93E11, 60F99}

\maketitle

\section{Introduction} 

\noindent 
In this paper, we aim to give a rigorous derivation of a continuous time limit of the Ensemble Kalman Filter. The Ensemble Kalman Filter (EnKF), or Ensemble Kalman-Bucy Filter (EnKBF) when considered in continuous time, is a data assimilation technique which since its invention in the 1990s gained wide popularity in many scientific fields such as oceanography or meteorology. The general idea is the following: a $d$-dimensional Markovian signal $X$ described by
\begin{equation}\label{model}
{\rm d}X_t = f\left(X_t\right){\rm d}t + Q^{\frac{1}{2}}{\rm d}W_t,
\end{equation}
is unknown but can be observed through a $p$-dimensional process $Y$
\begin{equation}\label{obs}
{\rm d}Y_t = g\left(X_t\right){\rm d}t + C^{\frac{1}{2}}{\rm d}V_t
\end{equation}
where $W$ and $V$ are independent Brownian motions. Based on the model assumptions and the real-time measurements, a filter calculates an estimate of the signal at the current time $t$. In case of the EnK(B)F, the signal is estimated in a Monte-Carlo fashion, i.e. an ensemble of initial conditions $X_0^{(i)}$, $i=1,...,M$, is propagated according to (\ref{model}) and (\ref{obs}) such that its ensemble mean
\begin{equation}
\bar{x}_t := \frac{1}{M} \sum_{i=1}^M X_t^{(i)}
\end{equation}
yields an estimate for the true signal at time $t$.

\smallskip 
\noindent 
In \cite{kelly2014}, the authors give a continuous-time formulation of these equations in the case of linear observations which we generalize as follows:
\begin{align}\label{classical}
{\rm d}X_t^{(i)} = f\left(X_t^{(i)}\right){\rm d}t &+ Q^{\frac{1}{2}}{\rm d}W_t^{(i)} \notag\\
&+ \frac{1}{M-1}E_t \mathcal{G}_t^T C^{-1} \left( {\rm d}Y_t + C^{\frac{1}{2}}{\rm d}V_t^{(i)} - g\left(X_t^{(i)}\right){\rm d}t\right),
\end{align}
where
\begin{align} 
E_t &:= \left[X_t^{(i)}-\bar{x}_t\right]_{i=1,...,M} \in \mathds{R}^{d \times M},\label{Et}\\
\mathcal{G}_t &:= \left[g\left(X_t^{(i)}\right)-\bar{g}_t\right]_{i=1,...,M} \in \mathds{R}^{p \times M}\label{Gt}
\end{align}
with
\begin{equation}
\bar{g}_t := \frac{1}{M}\sum_{i=1}^M g\left(X_t^{(i)}\right).
\end{equation}

\medskip 
\noindent 
The continuous formulation is of great use when investigating the mathematical properties of the filter by means of the rich theory of stochastic differential equations and continuous time dynamical systems. In the numerical application, however, only the discrete algorithm can be implemented. To relate both formulations to each other one analyzes the continuous time limit whose existence implies that the continuous formulation is an intrinsic result of the discrete filter. This means that each property we derive for the continuous formulation is also a property of the discrete scheme independent of the discretization step.\\
The above continuous formulation was achieved by considering the classical EnKF algorithm (cf. \cite{burgers1998}) and rearranging terms such that the result resembled an Euler-Maruyama scheme of a stochastic differential equation yielding (\ref{classical}). A similar solution can be found in \cite{schillings2017} in the context of inverse problems where the authors derive a tamed Euler-Maruyama type discretization using an EnKF algorithm given in \cite{iglesias2013}.

\smallskip
\noindent
As known from classical results, the Euler-Maruyama scheme strongly converges to the solution of the underlying stochastic differential equation under some regularity assumptions on the coefficients (cf. \cite{kloeden1992}).\\
Nevertheless, a rigorous analysis of the convergence of the Euler-Maruyama discretization is still required to verify the afore mentioned consistency. A first attempt to rigorously show a continuous time limit for the EnKF in the context of inverse problems is given in \cite{bloemker2018} but only for a simplified example. In this work, we investigate the limit for a more general setting and explicitly show convergence to (\ref{classical}) with effective rates in terms of the stepsize in Theorem \ref{ClassicalLimit} and Corollary \ref{ImprovedRates} for explicit rates.

\smallskip 
\noindent 
It turns out that assuring the additional integrability condition for the explicit rates is not that easy. The reason for this originates in the stochastic perturbation of the observation in the update step of the filter. In \cite{burgers1998}, the authors showed that omitting this perturbation yields an underestimation of the resulting covariance of the ensemble. Since this additional source of noise complicates the attempts of achieving better rates, one may consider different filtering approaches avoiding this particular step.

\smallskip
\noindent 
For instance consider the so called deterministic filtering algorithms (such as the Ensemble Square Root Filters (EnSRF), see e.g. \cite{whitaker2002} ) that aim at transforming the ensemble without additional noise and in such a way that the correct ensemble statistics are obtained. Recently in \cite{deWiljes2017}, a modified version of (\ref{classical}) was analyzed:
\begin{align}\label{modified}
{\rm d}X_t^{(i)} = &f\left(X_t^{(i)}\right){\rm d}t + \frac{1}{2}Q P_t^{-1}\left(X_t^{(i)} - \bar{x}_t\right){\rm d}t\notag\\
&+ \frac{1}{M-1}E_t \mathcal{G}_t^T C^{-1} \left({\rm d}Y_t + \frac{1}{2} \left(g\left(X_t^{(i)}\right)-\bar{g}_t\right){\rm d}t- g\left(X_t^{(i)}\right){\rm d}t\right)
\end{align}
with the ensemble covariance matrix
\begin{equation}
P_t := \frac{1}{M-1} \sum_{i=1}^M \left(X_t^{(i)}-\bar{x}_t\right)\left(X_t^{(i)}-\bar{x}_t\right)^T.
\end{equation}
In this formulation which we will call \textit{modified formulation} as opposed to the \textit{classical formulation} (\ref{classical}), the noise terms are replaced by 
\begin{equation}\label{noiseReplacement}
\begin{aligned}
Q^{\frac{1}{2}}{\rm d}W_t^{(i)} &\leadsto \frac{1}{2}Q P_t^{-1}\left(X_t^{(i)} - \bar{x}_t\right){\rm d}t,\\
C^{\frac{1}{2}}{\rm d}V_t^{(i)} &\leadsto \frac{1}{2} \left(g\left(X_t^{(i)}\right)-\bar{g}_t\right){\rm d}t.
\end{aligned}
\end{equation}
Note that with this choice the evolution equations for the ensemble mean $\bar{x}_t$ and the covariance matrix $P_t$ read
\begin{equation}
{\rm d}\bar{x}_t = \bar{f}_t{\rm d}t + \frac{1}{M-1}E_t\mathcal{G}_t^TC^{-1}\left({\rm d}Y_t - \bar{g}_t{\rm d}t\right)
\end{equation}
and
\begin{equation}
\begin{aligned}
\frac{{\rm d}}{{\rm d}t} P_t &= \frac{1}{M-1} \sum_{i=1}^M \left( \left(f\left(X_t^{(i)}\right)-\bar{f}_t\right)\left(X_t^{(i)}-\bar{x}_t\right)^T\right.\\
&\hspace{3cm}\left. + \left(X_t^{(i)}-\bar{x}_t\right)\left(f\left(X_t^{(i)}\right)-\bar{f}_t\right)^T\right)\\
&\hspace{0.5cm} + Q - \frac{1}{(M-1)^2}E_t\mathcal{G}_t^T C^{-1} \mathcal{G}_t E_t^T,
\end{aligned}
\end{equation}
which in the linear case coincide with the Kalman-Bucy equations explaining the particular choice of perturbation.\\

\noindent
For (\ref{modified}), the authors of \cite{deWiljes2017} were able to prove long-time stability and accuracy properties of the EnKBF. We therefore also give a continuous time limit analysis for (\ref{modified}) which might be a first contribution especially to future discussions on how to treat the noise numerically in context of achieving better approximation results.

\medskip 
\noindent 
The paper is organized as follows: after specifying in Section \ref{setting} the setting we consider throughout this paper, we shortly review in Section \ref{EnKF} the EnKF algorithm for the above mentioned versions. In Section \ref{Results}, we turn to the continuous time limit analysis and discuss our results both on the classical and the modified formulation as given in Theorem \ref{ClassicalLimit} and Theorem \ref{ModifiedLimit}, respectively. The proofs will then be given in Section \ref{Proofs} where we use various properties of the continuous and the discrete time processes involved which we give in Appendix \ref{appendixVt} and Appendix \ref{appendixVk}, respectively.

\section{Setting}\label{setting}

\noindent 
Consider the $d$-dimensional signal process modeled by
\begin{equation}
{\rm d}X_t = f(X_t) {\rm d}t + Q^{\frac{1}{2}}{\rm d}W_t, \hspace{0.5cm} X_t \in \mathds{R}^d
\end{equation}
with drift $f: \mathds{R}^d \rightarrow \mathds{R}^d$, $Q \in \mathds{R}^{d\times d}$ symmetric, positive definite, and $W = (W_t)_{t \geq 0}$ a $d$-dimensional standard Brownian motion.\\
The signal is observed via a $p$-dimensional process $Y \subset \mathds{R}^p$ modeled by
\begin{equation}
{\rm d}Y_t = g(X_t){\rm d}t + C^{\frac{1}{2}}{\rm d}V_t, \hspace{0.5cm}Y_t \in \mathds{R}^{p}
\end{equation}
with forward map $g : \mathds{R}^d \rightarrow \mathds{R}^p$, $C \in \mathds{R}^{p\times p}$ symmetric, positive definite, and $V = (V_t)_{t \geq 0}$ a $p$-dimensional standard Brownian motion.\\
Further $X_0$, $W$ and $V$ are mutually independent. Throughout this paper we assume that both $f$ and $g$ are Lipschitz-continuous with Lipschitz-constants $\|f\|_{\text{Lip}}$ and $\|g\|_{\text{Lip}}$, and bounded by $\|f\|_{\infty}$ and $\|g\|_{\infty}$, respectively, with $\|\cdot\|_{\infty}$ the usual supremum norm, as well as that all random entities are defined on a given probability space $(\Omega, \mathcal{F}, \mathds{P})$.

\medskip 
\noindent 
In some time horizon $T$ we consider the partition
\[ 0 = t_0 < t_1 < \dots <t_L = T\]
with step-size $h > 0$, i.e. $t_{k+1} = t_k + h = (k+1)h$.\\
The Euler-Maruyama scheme yields a time-discretization of the signal and observation process as follows:
\begin{align}
X_{t_k} &= X_{t_{k-1}} + h f(X_{t_{k-1}}) + Q^{\frac{1}{2}}\left(W_{t_k}-W_{t_{k-1}}\right)\\
Y_{t_k} &= Y_{t_{k-1}} + hg(X_{t_{k-1}}) + C^{\frac{1}{2}}\left(V_{t_k}-V_{t_{k-1}}\right)\\
\Rightarrow \Delta Y_k &:= Y_{t_k} - Y_{t_{k-1}} = h g(X_{t_{k-1}})+ C^{\frac{1}{2}}\left(V_{t_k}-V_{t_{k-1}}\right).
\end{align}
Thus $\Delta Y$ is the discrete-time observation process for the above time-discretization of $X$.

\section{Ensemble Kalman Filter algorithms}\label{EnKF}

\subsection{The classical formulation}\label{ClassicalAlgo}

\noindent
We adopt the formulation in \cite{law2015} as follows: assume that at time $t_{k-1}$, we have an ensemble of $M$ analyzed ensemble members $\left(X_{t_{k-1}}^{(i),a}\right)_{i=1,...,M}$. The forecast step yields a new ensemble consisting of the forecasted members, i.e. for each $i = 1, ..., M$ obtain
\begin{equation}
X_{t_k}^{(i),f} = X_{t_{k-1}}^{(i),a} + hf\left(X_{t_{k-1}}^{(i),a}\right) + Q^{\frac{1}{2}}\tilde{W}_k^{(i)}
\end{equation}
where $\left(\tilde{W}_k^{(i)}\right)_{i=1,...,M}$ is an i.i.d. sequence of samples of $\mathcal{N}(0, h{\rm Id})$. The ensemble mean is then given by
\begin{equation}
\bar{x}_k^{f} = \frac{1}{M} \sum_{i=1}^M X_{t_k}^{(i),f}.
\end{equation}

\medskip 
\noindent 
In the update step we stochastically perturb the new observation, i.e. define
\begin{equation}
\Delta Y_k^{(i)} := \Delta Y_k + C^{\frac{1}{2}}\tilde{V}_k^{(i)},
\end{equation}
where $\left(\tilde{V}_k^{(i)}\right)_{i=1,...,M}$ is an i.i.d. sequence of samples of $\mathcal{N}(0,h{\rm Id})$, and formulate an analyzed ensemble
\begin{equation}\label{classicalUpdate}
X_{t_k}^{(i),a} = X_{t_k}^{(i),f} + K_k \left( \Delta Y_k^{(i)} - hg\left(X_{t_k}^{(i),f}\right)\right)
\end{equation}
with Kalman gain matrix
\begin{equation}\label{KalmanGain}
K_k = \frac{1}{M-1}E_k^f \left(\mathcal{G}_k^f\right)^T \left(C + \frac{h}{M-1} \mathcal{G}_k^f \left(\mathcal{G}_k^f\right)^T\right)^{-1}
\end{equation}
where 
\begin{align}
E_k^{f} &= \left[X_{t_k}^{(i),f} - \bar{x}_k^{f}\right]_{i=1,...,M},\\
\mathcal{G}_k^{f} &= \left[g\left(X_{t_k}^{(i),f}\right)-\bar{g}_k^{f}\right]_{i=1,...,M},\hspace{1cm} \bar{g}_k^{f} &= \frac{1}{M} \sum_{i=1}^M g\left(X_{t_k}^{(i),f}\right).
\end{align}

\smallskip
\noindent
In total, the algorithm for the ensemble members as well as centered ensemble members reads:
\begin{algo}\label{classicalAlgo}
Forecast step:
\begin{equation}
\begin{aligned}
&X_{t_k}^{(i),f} = X_{t_{k-1}}^{(i),a} + hf\left(X_{t_{k-1}}^{(i),a}\right) + Q^{\frac{1}{2}}\tilde{W}_k^{(i)},\\
&X_{t_k}^{(i),f} - \bar{x}_k^{f} = X_{t_{k-1}}^{(i),a}-\bar{x}_{k-1}^{a} + h\left(f\left(X_{t_{k-1}}^{(i),a}\right)-\bar{f}_{k-1}^{a}\right) + Q^{\frac{1}{2}}\left(\tilde{W}_k^{(i)}-\bar{w}_k\right)
\end{aligned}
\end{equation}
Update step:
\begin{equation}\label{updateClassical}
\begin{aligned}
&X_{t_k}^{(i),a} = X_{t_k}^{(i),f} + K_k \left( \Delta Y_k + C^{\frac{1}{2}}\tilde{V}_k^{(i)} - hg\left(X_{t_k}^{(i),f}\right)\right),\\
&X_{t_k}^{(i),a} - \bar{x}_k^{a} = X_{t_k}^{(i),f}-\bar{x}_k^{f} + K_k\left(C^{\frac{1}{2}}\left(\tilde{V}_k^{(i)}-\bar{v}_k\right) - h\left(g\left(X_{t_k}^{(i),f}\right)-\bar{g}_k^{f}\right)\right)
\end{aligned}
\end{equation}
\end{algo}

\begin{remark}
In the case of a linear observation operator, i.e. $g(x) = Gx$, an alternative interpretation of the Kalman gain matrix is obtained from the following optimization problem (see Proposition 3.2 in \cite{kelly2014}): minimizing the mean-square distance of the ensemble members to the forecast as well as the information provided by the innovation, i.e. minimizing the functional
\begin{equation}\label{cost}
\mathcal{J}_k^{(i)}(X) := \frac{1}{2} \left\| \Delta Y_k^{(i)} - hGX\right\|_{C}^2 + \frac{1}{2} \left\| X - X_{t_k}^{(i),f}\right\|_{P_k^f}^2
\end{equation}
where for a positive definite matrix $A$ we use the Mahalanobis norm
\begin{equation}
\| x \|_A^2 := \langle A^{-1} x, x \rangle = x^T A^{-1} x,
\end{equation}
leads to the minimizer given by (\ref{classicalUpdate}) for each $i=1,...,M$ with gain
\begin{equation}
K_k = P_k^{f}G^T \left(C+hGP_k^{f}G^T\right)^{-1}, \hspace{1cm} P_k^{f} = \frac{1}{M-1} E_k^{f}\left(E_k^{f}\right)^T.
\end{equation}

\smallskip
\noindent
In the general nonlinear case, however, we cannot employ such a derivation since the above minimization problem cannot be solved analytically for general $g$. Thus we choose a similar structure of the gain matrix to obtain Equation (\ref{KalmanGain}) by replacing
\begin{equation}
\begin{aligned}
P_k^{f}G^T &\leadsto \frac{1}{M-1}E_k^{f}\left(\mathcal{G}_k^{f}\right)^T,\\
GP_k^{f}G^T &\leadsto \frac{1}{M-1}\mathcal{G}_k^{f}\left(\mathcal{G}_k^{f}\right)^T.
\end{aligned}
\end{equation}
\end{remark}

\subsection{The modified formulation}\label{modifiedAlgo}
In discrete time, the choice (\ref{noiseReplacement}) translates into the addition of similarly motivated terms of the form
\begin{equation}
\begin{aligned}
Q^{\frac{1}{2}}\tilde{W}_k^{(i)} &:= \frac{h}{2} Q \left(P_{k-1}^{a}\right)^{-1} \left(X_{t_{k-1}}^{(i),a}-\bar{x}_{k-1}^{a}\right),\\
C^{\frac{1}{2}}\tilde{V}_k^{(i)} &:= \frac{h}{2}\left(g\left(X_{t_k}^{(i),f}\right)-\bar{g}_k^{f}\right).
\end{aligned}
\end{equation}
This in total yields a discrete-time algorithm for the ensemble members as well as the centered ensemble members:
\begin{algo}\label{ModifiedAlgorithm}
Forecast step:
\begin{equation}\label{centerModF}
\begin{aligned}
&X_{t_k}^{(i),f} = X_{t_{k-1}}^{(i),a} + hf\left(X_{t_{k-1}}^{(i),a}\right) + \frac{h}{2} Q \left(P_{k-1}^{a}\right)^{-1} \left(X_{t_{k-1}}^{(i),a}-\bar{x}_{k-1}^{a}\right),\\
&X_{t_k}^{(i),f} - \bar{x}_k^{f} = X_{t_{k-1}}^{(i),a}-\bar{x}_{k-1}^{a} + h\left(f\left(X_{t_{k-1}}^{(i),a}\right) - \bar{f}_{k-1}^{a}\right) + \frac{h}{2}Q\left(P_{k-1}^{a}\right)^{-1}\left(X_{t_{k-1}}^{(i),a}-\bar{x}_{k-1}^{a}\right)
\end{aligned}
\end{equation}
Update step:
\begin{equation}\label{centerModA}
\begin{aligned}
&X_{t_k}^{(i),a} = X_{t_k}^{(i),f} + K_k \left( \Delta Y_k - \frac{h}{2}\left(g\left(X_{t_k}^{(i),f}\right) + \bar{g}_k^f\right)\right),\\
&X_{t_k}^{(i),a}-\bar{x}_k^{a} = X_{t_k}^{(i),f} - \bar{x}_k^{f} - \frac{h}{2}K_k\left(g\left(X_{t_k}^{(i),f}\right)-\bar{g}_k^{f}\right)
\end{aligned}
\end{equation}
\end{algo}

\section{The continuous time limit}\label{Results}

\noindent
Similar to \cite{bloemker2018}, introduce for $t \in [t_{k-1}, t_k]$ the notation
\begin{equation}
\eta(t) = t_{k-1}, \hspace{0.5cm} \eta_{+}(t) = t_k, \hspace{0.5cm} \nu(t) = k-1, \hspace{0.5cm} \nu_{+}(t) = k.
\end{equation}
For the classical formulation we obtain the following continuous time limit result:
\begin{theorem}\label{ClassicalLimit}
Consider Algorithm \ref{ClassicalAlgo}. If the initial ensemble satisfies the following properties
\begin{align}
&\sum_{i=1}^M \mathds{E}\left[\left\|X_0^{(i),a} - X_0^{(i)}\right\|^2\right] \in O\left(h^{2\gamma}\right),\label{initialProp1}\\
&\sum_{i=1}^M \mathds{E}\left[\left\|X_0^{(i),a} - \bar{x}_0^{a}\right\|^2\right] <\infty,\hspace{0.5cm} \mathds{E}\left[\left(\sum_{i=1}^M \left\|X_0^{(i),a}-\bar{x}_0^{a}\right\|^2\right)^2\right] < \infty\label{initialProp2}
\end{align}
where $\gamma < \frac{1}{2}$ is the Hölder-coefficient coming from $W$, $V$, and $Y$, then there exists a continuous process $(L(s))_{s \geq 0}$ such that
\begin{equation}
\sup_{t \in [0,T]}\mathds{E}\left[\text{e}^{-\int_0^t2L(s){\rm d}s}\left(\sum_{i=1}^M \left\|X_{\eta(t)}^{(i),a}-X_t^{(i)}\right\|^2\right)\right] \in O\left(h^{2\gamma}\right).
\end{equation}
\end{theorem}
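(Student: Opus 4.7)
The plan is to compare the discrete piecewise-constant interpolation $t\mapsto X_{\eta(t)}^{(i),a}$ with the continuous ensemble $X_t^{(i)}$ path-by-path, identifying the discrete noise samples $\tilde W_k^{(i)},\tilde V_k^{(i)}$ with the increments of the same Brownian motions $W^{(i)},V^{(i)}$ driving (\ref{classical}), and then closing a stochastic Gronwall argument whose random Lipschitz constant is exactly the process $L(t)$ being compensated by the weight $e^{-\int_0^t 2L(s){\rm d}s}$.

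First I would rewrite one step of Algorithm \ref{classicalAlgo} so that it resembles an Euler--Maruyama step for (\ref{classical}). Using the identity
\[
\Bigl(C+\tfrac{h}{M-1}\mathcal G_k^f(\mathcal G_k^f)^T\Bigr)^{-1} = C^{-1} - \tfrac{h}{M-1}\,C^{-1}\mathcal G_k^f(\mathcal G_k^f)^T\Bigl(C+\tfrac{h}{M-1}\mathcal G_k^f(\mathcal G_k^f)^T\Bigr)^{-1},
\]
the discrete Kalman gain $K_k$ splits into the continuous-time coefficient $\frac{1}{M-1}E_k^f(\mathcal G_k^f)^TC^{-1}$ plus an explicit $O(h)$ correction. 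This turns (\ref{updateClassical}) into an Euler--Maruyama step for (\ref{classical}) plus a sum of structured remainders involving $h$, $E_k^f$, $\mathcal G_k^f$ and the forecast--analysis difference $X_{t_k}^{(i),f}-X_{t_{k-1}}^{(i),a}$.

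Second, setting $e^{(i)}_t:=X_{\eta(t)}^{(i),a}-X_t^{(i)}$ and $S_t:=\sum_{i=1}^M\|e^{(i)}_t\|^2$, I would apply It\^o's formula to $S_t$ along the continuous dynamics while handling the jumps of $X_{\eta(t)}^{(i),a}$ at the times $t_k$. For each nonlinear term I would use the splitting
\[
\phi(X_{\eta(t)}^{(i),a})-\phi(X_t^{(i)}) = \underbrace{\bigl[\phi(X_{\eta(t)}^{(i),a})-\phi(X_{\eta(t)}^{(i)})\bigr]}_{\text{Lipschitz in }e^{(i)}} + \underbrace{\bigl[\phi(X_{\eta(t)}^{(i)})-\phi(X_t^{(i)})\bigr]}_{\text{H\"older, }O(h^\gamma)},
\]
invoking the Lipschitz hypotheses on $f,g$ and the $\gamma$-H\"older regularity of $W,V,Y$, together with the moment bounds on $E_t,\mathcal G_t$ from Appendix \ref{appendixVt} and their discrete analogues in Appendix \ref{appendixVk}. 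This should yield
\[
{\rm d}S_t \le 2L(t)\,S_t\,{\rm d}t + {\rm d}R_t + {\rm d}N_t,
\]
with $L(t)$ a continuous adapted process depending on $\|f\|_{\text{Lip}}$, $\|g\|_{\text{Lip}}$, $\|C^{-1}\|$ and the current ensemble norms, $R$ a finite-variation remainder whose expected total variation on $[0,T]$ is $O(h^{2\gamma})$, and $N$ a local martingale. Applying the product rule to $e^{-\int_0^t 2L(s){\rm d}s}S_t$ removes the linear-in-$S_t$ term, taking expectations (with standard localisation) kills the martingale, and the initial value is $O(h^{2\gamma})$ by (\ref{initialProp1}), the required control on the ensemble spread being supplied by (\ref{initialProp2}).

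The main obstacle is the Kalman-gain coupling. $K_k$ depends nonlinearly on the ensemble through $E_k^f,\mathcal G_k^f$ and an implicit matrix inverse, so its discrete-to-continuous comparison forces careful control of mixed moments of products of ensemble increments. Compounding this is the stochastically perturbed observation $C^{1/2}\tilde V_k^{(i)}$: its single-step size is only $O(\sqrt h)$ and its interaction with $K_k$ produces terms whose effective Lipschitz constant is random and not uniformly integrable. This is precisely why a deterministic Gronwall does not close and the random exponential weight is required, and it is also the source of the restriction $\gamma<\tfrac12$ and of the additional integrability hypothesis needed in Corollary \ref{ImprovedRates} to upgrade the rates.
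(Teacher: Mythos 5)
Your high-level strategy --- coupling the discrete noise samples to the Brownian motions driving (\ref{classical}), splitting the discrete gain $K_k$ from the continuous coefficient $\frac{1}{M-1}E_t\mathcal G_t^TC^{-1}$ via the resolvent identity, and closing a stochastic Gronwall argument with the random exponential weight $e^{-\int_0^t 2L(s){\rm d}s}$ --- matches the paper's proof in spirit, and your identification of where (\ref{initialProp1}) and (\ref{initialProp2}) enter is correct. However, there is a genuine gap in your central technical step. You propose to apply It\^o's formula directly to $S_t=\sum_i\|X_{\eta(t)}^{(i),a}-X_t^{(i)}\|^2$, treating $X_{\eta(t)}^{(i),a}$ as a piecewise-constant process with jumps at the $t_k$. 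Between consecutive grid points the discrete process is frozen, so only $X_t^{(i)}$ moves, and the It\^o correction contributes at least $M\,\mathrm{tr}(Q)\,{\rm d}t$ to ${\rm d}S_t$ on every interval; integrated over $[0,T]$ this is an $O(1)$ quantity, not $O(h^{2\gamma})$. This contribution is cancelled only by the jump terms at the $t_k$ (where the discrete update absorbs the same Brownian increment), and extracting that cancellation forces you back into a discrete recursion at the grid points rather than the continuous differential inequality ${\rm d}S_t\le 2L(t)S_t\,{\rm d}t+{\rm d}R_t+{\rm d}N_t$ with $\mathds{E}[|R|_{TV}]=O(h^{2\gamma})$ that you claim. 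The paper's way around this is the missing ingredient in your argument: it introduces the continuous embedding $\widehat X_t^{(i)}$ of the update step, a genuine continuous semimartingale driven by the \emph{same} increments ${\rm d}W^{(i)}$, ${\rm d}V^{(i)}$, ${\rm d}Y$ but with the frozen coefficients $K_{\nu_+(s)}$, so that the difference $\widehat X_t^{(i)}-X_t^{(i)}$ has quadratic variation controlled by $\|K_{\nu_+(t)}-\frac{1}{M-1}E_t\mathcal G_t^TC^{-1}\|^2$ (small by Lemma \ref{coundCommonCoeff}), while the residual gap $\|X_{\eta(t)}^{(i),a}-\widehat X_t^{(i)}\|$ is estimated pathwise by the H\"older norms of $W^{(i)}$, $V^{(i)}$, $Y$.

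A second, smaller issue: your splitting of each nonlinear term through the intermediate point $X_{\eta(t)}^{(i)}$ requires $\gamma$-H\"older control of the continuous filter paths themselves. For the classical formulation the diffusion coefficient $\frac{1}{M-1}E_t\mathcal G_t^TC^{-1}$ is not uniformly bounded (only controlled through $\mathcal V_t$, which has merely locally bounded moments), so the H\"older constant of $X^{(i)}$ is a random variable whose integrability you would still have to establish --- this is exactly the difficulty the paper sidesteps by routing the pathwise estimate through $W$, $V$, $Y$ only, and it is why the stronger, uniform-in-time statement is reserved for the modified formulation of Theorem \ref{ModifiedLimit}, whose coefficients are bounded.
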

\noindent
Assuming further an integrability assumption on $L$ we also obtain an explicit convergence rate:
\begin{corollary}\label{ImprovedRates}
If, moreover, there exists a $\delta > 0$ such that
\begin{equation}\label{boundedL}
\sup_{t\in [0,T]}\mathds{E}\left[\text{e}^{2\delta \int_0^t L(s){\rm d}s}\right] <\infty,
\end{equation}
then
\begin{equation}
\sup_{t\in [0,T]}\mathds{E}\left[\left(\sum_{i=1}^M \left\| X_{\eta(t)}^{(i),a}-X_t^{(i)}\right\|^2\right)^{\frac{\delta}{1+\delta}}\right] \in O\left(h^{2\gamma\frac{ \delta}{1+\delta}}\right).
\end{equation}
\end{corollary}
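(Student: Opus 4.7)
The plan is to exploit the weighted bound from Theorem \ref{ClassicalLimit} via Hölder's inequality, trading integrability of the exponential weight for a suitable power of the unweighted error. Write, for brevity,
\[
Z_t := \sum_{i=1}^M \bigl\|X_{\eta(t)}^{(i),a} - X_t^{(i)}\bigr\|^2, \qquad A_t := \int_0^t L(s)\,{\rm d}s,
\]
so that Theorem \ref{ClassicalLimit} reads $\sup_{t\in[0,T]} \mathds{E}\bigl[e^{-2A_t} Z_t\bigr] \in O(h^{2\gamma})$, while hypothesis \eqref{boundedL} gives $\sup_{t\in[0,T]} \mathds{E}\bigl[e^{2\delta A_t}\bigr] < \infty$.

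The key algebraic identity is the factorization
\[
Z_t^{\frac{\delta}{1+\delta}} \;=\; \bigl(e^{-2A_t} Z_t\bigr)^{\frac{\delta}{1+\delta}} \cdot e^{\frac{2\delta}{1+\delta} A_t},
\]
which simply redistributes the exponential weight. I would then apply Hölder's inequality with the conjugate exponents $p = \frac{1+\delta}{\delta}$ and $q = 1+\delta$ (so that $\tfrac{1}{p}+\tfrac{1}{q}=1$), chosen precisely so that $((e^{-2A_t} Z_t)^{\delta/(1+\delta)})^{p} = e^{-2A_t} Z_t$ and $(e^{2\delta A_t/(1+\delta)})^{q} = e^{2\delta A_t}$. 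This yields
\[
\mathds{E}\bigl[Z_t^{\frac{\delta}{1+\delta}}\bigr] \;\le\; \bigl(\mathds{E}\bigl[e^{-2A_t} Z_t\bigr]\bigr)^{\frac{\delta}{1+\delta}} \cdot \bigl(\mathds{E}\bigl[e^{2\delta A_t}\bigr]\bigr)^{\frac{1}{1+\delta}}.
\]

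The first factor on the right is $O(h^{2\gamma\delta/(1+\delta)})$ by Theorem \ref{ClassicalLimit}, and the second factor is bounded uniformly in $t \in [0,T]$ by assumption \eqref{boundedL}. Taking the supremum over $t \in [0,T]$ completes the argument. There is no real obstacle here; the only subtlety is identifying the correct pair of Hölder exponents, which is dictated by the requirement that one factor reproduce exactly the quantity controlled by Theorem \ref{ClassicalLimit} and the other exactly the quantity controlled by \eqref{boundedL}. The exponent $\tfrac{\delta}{1+\delta}$ in the conclusion is not an artifact of the proof but the optimal trade-off between these two constraints.
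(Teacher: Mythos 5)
Your proof is correct and is precisely the Hölder-interpolation argument the paper intends: the corollary is stated without an explicit proof, but the exponent $\frac{\delta}{1+\delta}$ and the rate $O\left(h^{2\gamma\frac{\delta}{1+\delta}}\right)$ in the statement are exactly what your factorization and choice of conjugate exponents $p=\frac{1+\delta}{\delta}$, $q=1+\delta$ produce from Theorem \ref{ClassicalLimit} and hypothesis (\ref{boundedL}). No gaps.
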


\medskip
\noindent
To establish the exponential moment estimate (\ref{boundedL}) turns out to be rather difficult. The reason for that lies in the following fact: $L(t)$ is an affine linear functional of
\begin{equation}\label{Vt}
\mathcal{V}_t := \frac{1}{M-1} \sum_{i=1}^M \left\|X_t^{(i)} - \bar{x}_t\right\|^2
\end{equation}
which reduces the problem of finding exponential moment estimates of $\int_0^tL(s){\rm d}s$ to the same problem for $\int_0^t\mathcal{V}_s{\rm d}s$. However, $\mathcal{V}$ satisfies the following differential inequality (see Lemma \ref{boundVtclassical})
\begin{equation}
{\rm d} \mathcal{V}_t \leq 2(Lf)_{+} \mathcal{V}_t {\rm d}t + \text{tr}(Q) {\rm d}t + {\rm d}N_t
\end{equation}
where
\begin{equation}\label{dissipativityf}
(Lf)_{+} := \sup_{x \neq y} \frac{\left \langle f(x) -f(y), x -y \right \rangle}{\|x-y\|^2}
\end{equation}
and
\begin{equation}
{\rm d}N_t =  \frac{2}{M-1} \sum_{i=1}^M \left \langle X_t^{(i)} - \bar{x}_t, Q^{\frac{1}{2}}{\rm d}W_t^{(i)} + \frac{1}{M-1}E_t\mathcal{G}_t^TC^{-\frac{1}{2}}{\rm d}V_t^{(i)}\right \rangle
\end{equation}
hence the above problem requires the control of the stochastic integral with respect to $N$. But note that the quadratic variation of $N$ satisfies
\begin{equation}\label{VquadVar2}
\frac{{\rm d}}{{\rm d}t} \langle N\rangle_t \leq \frac{4t}{(M-1)^2}\left(\left\|Q^{\frac{1}{2}}\right\|^2 + \frac{4M\|g\|_{\infty}^2\left\|C^{-\frac{1}{2}}\right\|^2}{M-1}\mathcal{V}_t\right)\mathcal{V}_t
\end{equation}
which gives no effective control on the growth of $N$ and hence on $\mathcal{V}$.

\medskip

\begin{remark}
Whithout the use of a stochastic perturbation of the observation in the classical EnKF formulation (\ref{classical}), however, the strategy of the proof of Lemma 21 in \cite{deWiljes2017} is indeed applicable to our setting hence we can find a $\delta > 0$ such that for the corresponding process $L$ it holds
\begin{equation}
\sup_{t\in [0,T]}\mathds{E}\left[\text{e}^{2\delta \int_0^t L(s){\rm d}s}\right] <\infty.
\end{equation}
\end{remark}

\medskip
\noindent
As already mentioned in the Introduction we obtain better error estimates for the modified formulation:
\begin{theorem}\label{ModifiedLimit}
Consider Algorithm \ref{ModifiedAlgorithm}. If the initial ensemble satisfies
\begin{equation}
\sum_{i=1}^M\mathds{E}\left[\left\|X_0^{(i),a}-X_0^{(i)}\right\|^2\right] \in O\left(h\right)
\end{equation}
and has bounded inverse covariance matrix, then
\begin{equation}
\mathds{E}\left[\sup_{t\in [0,T]}\sum_{i=1}^M \left\|X_{\eta(t)}^{(i),a}-X_t^{(i)}\right\|^2\right] \in O\left(h\right).
\end{equation}
\end{theorem}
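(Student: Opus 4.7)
The plan is to view Algorithm \ref{ModifiedAlgorithm} as an Euler--Maruyama-type discretization of \eqref{modified} whose only genuinely stochastic driver is the (ensemble-common) observation process $Y$, and then carry out a strong error analysis that delivers the optimal Euler rate $O(h)$ on the squared error.

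First I would combine the forecast and update steps \eqref{centerModF}--\eqref{centerModA} into a single one-step recursion for $X_{t_k}^{(i),a}$ in terms of $X_{t_{k-1}}^{(i),a}$ and the observation increment $\Delta Y_k = \int_{t_{k-1}}^{t_k} g(X_s)\,{\rm d}s + C^{1/2}(V_{t_k}-V_{t_{k-1}})$. Integrating \eqref{modified} over $[t_{k-1},t_k]$ yields the analogous expression for $X_{t_k}^{(i)}$, but with the coefficients evaluated along the true trajectory rather than frozen at the left endpoint. Setting $e_k^{(i)} := X_{t_k}^{(i),a}-X_{t_k}^{(i)}$, the difference decomposes into (a) a Lipschitz coefficient-consistency error in $f$, $g$, the pseudo-diffusion drift $\tfrac{1}{2}Q P^{-1}(X-\bar{x})$, the innovation correction $\tfrac{1}{2(M-1)}E\mathcal{G}^T C^{-1}(g-\bar{g})$, and the gain $K_k$ versus its continuous analogue $\tfrac{1}{M-1}E_s\mathcal{G}_s^T C^{-1}$; together with (b) a stochastic consistency error coming from the difference between $\Delta Y_k$ and $\int_{t_{k-1}}^{t_k}[g(X_s)\,{\rm d}s + C^{1/2}{\rm d}V_s]$ read against the frozen vs.\ continuously-evaluated Kalman coefficient. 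Squaring, summing over $i$ and taking expectations should produce a recursion
\[
\mathds{E}\Big[\sum_{i=1}^M \|e_k^{(i)}\|^2\Big] \le (1+C_1 h)\,\mathds{E}\Big[\sum_{i=1}^M \|e_{k-1}^{(i)}\|^2\Big] + C_2 h^2,
\]
and a discrete Grönwall argument, together with the initial assumption, then yields $\mathds{E}[\sum_i\|e_k^{(i)}\|^2]\in O(h)$ uniformly in $k$.

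To upgrade to the sup-in-time estimate I would interpolate on $[t_{k-1},t_k]$ by writing $X_{\eta(t)}^{(i),a}-X_t^{(i)} = e_{k-1}^{(i)} - (X_t^{(i)}-X_{t_{k-1}}^{(i)})$, control the continuous oscillation via its drift contribution (of order $h$ in $L^2$) and its martingale contribution driven by $C^{1/2}(V_t-V_{t_{k-1}})$ weighted by $\tfrac{1}{M-1}E_s\mathcal{G}_s^T C^{-1}$ (of order $h^{1/2}$ in $L^2$), and then apply Burkholder--Davis--Gundy to the cumulative martingale part of the $Y$-driven integral to pass from $\max_k$ to $\sup_{t\in[0,T]}$. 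The absence of the per-ensemble, freshly-sampled Brownian perturbations $\tilde{W}_k^{(i)}$, $\tilde{V}_k^{(i)}$ that drive Algorithm \ref{classicalAlgo} is precisely what prevents the $h^{\epsilon}$ loss in Hölder regularity responsible for the worse rate in Theorem \ref{ClassicalLimit}.

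The main obstacle is that every coefficient in \eqref{modified} and Algorithm \ref{ModifiedAlgorithm} depends on the ensemble through the inverse covariance matrices $P_t^{-1}$, $(P_{k-1}^{a})^{-1}$ and through $K_k$, so the Lipschitz and boundedness bounds needed to close the recursion are not automatic. On the continuous side the required control of $P_t^{-1}$ and of the spread $\mathcal{V}_t$ from \eqref{Vt} follows from the stability analysis of \cite{deWiljes2017}, as recalled in Appendix \ref{appendixVt}. The real work is to prove the discrete analogues in Appendix \ref{appendixVk}, namely that the assumed boundedness of the initial inverse covariance matrix propagates under Algorithm \ref{ModifiedAlgorithm} with constants that do not degenerate as $h\to 0$; only once this is in place can the constants $C_1,C_2$ above be taken independent of $h$ and the $O(h)$ rate be genuinely attained.
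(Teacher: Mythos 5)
Your proposal is correct and follows essentially the same route as the paper: decompose the error into the grid-point discretization error plus the within-step oscillation of the continuous ensemble, use the deterministic (noise-free) bounds on $\mathcal{V}_t$, $P_t^{-1}$ and their discrete counterparts $\mathcal{V}_k^{a}$, $\mathcal{V}_k^{f}$, $\left(P_k^{a}\right)^{-1}$ from Appendices \ref{appendixVtmod} and \ref{appendixVkModified} to make all coefficient Lipschitz constants uniform in $h$, apply Burkholder--Davis--Gundy to the single common observation martingale, and close with Gronwall. Your discrete one-step recursion with discrete Gronwall is just the grid-point reformulation of the paper's cumulative integral inequality with continuous Gronwall, and you correctly identify both the source of the improved $O(h)$ rate (no per-member resampled noise) and the crux of the argument (propagating the lower bound on $\lambda^{\min}\left(P_k^{a}\right)$ uniformly in $h$, which is Lemma \ref{boundVkaModified}).
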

\noindent
Note that this statement is stronger than the result in Theorem \ref{ClassicalLimit} which is due to the fact that the coefficients of (\ref{modified}) are bounded as will be shown in Section \ref{Proofs}. In particular, this yields that for all $t \in [0,T]$
\begin{equation}
\sum_{i=1}^M \left\|X_{\eta(t)}^{(i)}-X_t^{(i)}\right\| \leq \tilde{C}|t-\eta(t)|^{\rho}
\end{equation}
for some $\rho <\frac{1}{2}$, thus we may concentrate our analysis on the time points $t_k$ of the time interval partition.

\section{Proofs of Theorem \ref{ClassicalLimit} and Theorem \ref{ModifiedLimit}}\label{Proofs}

\subsection{Preliminaries}

\noindent
For the classical formulation we introduce the 'continuous embedding' of the update step
\begin{equation}
\begin{aligned}
\widehat{X}_t^{(i)} &:= X_{\eta(t)}^{(i),a} + \int_{\eta(t)}^t f\left(X_{\eta(s)}^{(i),a}\right) {\rm d}s + \int_{\eta(t)}^tQ^{\frac{1}{2}}{\rm d}W_s^{(i)}\\
&\hspace{0.5cm}+ \int_{\eta(t)}^t K_{\nu_{+}(s)} {\rm d}Y_s + \int_{\eta(t)}^t K_{\nu_{+}(s)} C^{\frac{1}{2}} {\rm d}V_s^{(i)} - \int_{\eta(t)}^t K_{\nu_{+}(s)} g\left(X_{\eta_+(s)}^{(i),f}\right) {\rm d}s
\end{aligned}
\end{equation}
and consider the decomposition
\begin{equation}
\left\|X_{\eta(t)}^{(i),a}-X_t^{(i)}\right\|^2 \leq 2 \left\|X_{\eta(t)}^{(i),a}-\widehat{X}_t^{(i)}\right\|^2 + 2 \left\|\widehat{X}_t^{(i)}-X_t^{(i)}\right\|^2
\end{equation}
where
\begin{equation}\label{classicalSemimartingale}
\begin{aligned}
\widehat{X}_t^{(i)} - X_t^{(i)}&= X_{\eta(t)}^{(i),a} - X_{\eta(t)}^{(i)}\\
&\hspace{0.5cm} + \int_{\eta(t)}^t f\left(X_{\eta(s)}^{(i),a}\right)-f\left(\widehat{X}_s^{(i)}\right){\rm d}s + \int_{\eta(t)}^t f\left(\widehat{X}_s^{(i)}\right)-f\left(X_s^{(i)}\right){\rm d}s\\
&\hspace{0.5cm} + \int_{\eta(t)}^t \left(K_{\nu_{+}(s)} - \frac{1}{M-1}E_s\mathcal{G}_s^TC^{-1}\right){\rm d}Y_s\\
&\hspace{0.5cm} + \int_{\eta(t)}^t \left(K_{\nu_{+}(s)}-\frac{1}{M-1}E_s\mathcal{G}_s^TC^{-1}\right)C^{\frac{1}{2}}{\rm d}V_s^{(i)}\\
&\hspace{0.5cm} - \int_{\eta(t)}^t \left(K_{\nu_{+}(s)}g\left(X_{\eta_+(s)}^{(i),f}\right)-\frac{1}{M-1}E_s\mathcal{G}_s^TC^{-1}g\left(\widehat{X}_s^{(i)}\right)\right){\rm d}s\\
&\hspace{0.5cm} - \int_{\eta(t)}^t \frac{1}{M-1}E_s\mathcal{G}_s^TC^{-1}\left(g\left(\widehat{X}_s^{(i)}\right)-g\left(X_s^{(i)}\right)\right){\rm d}s.
\end{aligned}
\end{equation}

\medskip
\noindent
In the modified formulation consider the decomposition
\begin{equation}
\left\|X_{\eta(t)}^{(i),a}-X_t^{(i)}\right\|^2 \leq 2 \left\|X_{\eta(t)}^{(i),a}-X_{\eta(t)}^{(i)}\right\|^2 + 2 \left\|X_{\eta(t)}^{(i)}-X_t^{(i)}\right\|^2
\end{equation}
where
\begin{equation}\label{modifiedSemimartingale}
\begin{aligned}
&X_{\eta(t)}^{(i),a}- X_{\eta(t)}^{(i)}\\
&= X_0^{(i),a} - X_0^{(i)}\\
&\hspace{0.5cm}+\int_0^{\eta(t)} f\left(X_{\eta(s)}^{(i),a}\right)-f\left(X_s^{(i)}\right){\rm d}s\\
&\hspace{0.5cm} + \int_0^{\eta(t)} \frac{1}{2}Q\left(\left(P_{\nu(s)}^{a}\right)^{-1}\left(X_{\eta(s)}^{(i),a}-\bar{x}_{\nu(s)}^{a}\right)-P_s^{-1}\left(X_s^{(i)}-\bar{x}_s\right)\right){\rm d}s\\
&\hspace{0.5cm} + \int_0^{\eta(t)}\left(K_{\nu_+(s)}-\frac{1}{M-1}E_s\mathcal{G}_s^TC^{-1}\right){\rm d}Y_s\\
&\hspace{0.5cm} - \int_0^{\eta(t)}\frac{1}{2}\left(K_{\nu_+(s)}\left(g\left(X_{\eta_+(s)}^{(i),f}\right)+\bar{g}_{\nu_+(s)}^{f}\right)-\frac{1}{M-1}E_s\mathcal{G}_s^TC^{-1}\left(g\left(X_s^{(i)}\right)+\bar{g}_s\right)\right){\rm d}s.
\end{aligned}
\end{equation}

\medskip
\noindent
Further note that
\begin{equation}\label{Decomp}
\begin{aligned}
&K_{\nu_{+}(s)}g(x)-\frac{1}{M-1}E_s\mathcal{G}_s^TC^{-1}g(y)\\
&= K_{\nu_{+}(s)}\left(g(x)-g(y)\right) + \left(K_{\nu_{+}(s)}-\frac{1}{M-1}E_s\mathcal{G}_s^TC^{-1}\right)g(y)\\
&= \left(K_{\nu_{+}(s)}-\frac{1}{M-1}E_s\mathcal{G}_s^TC^{-1}\right)g(x)+\frac{1}{M-1}E_s\mathcal{G}_s^TC^{-1}\left(g(x)-g(y)\right).
\end{aligned}
\end{equation}

\medskip
\noindent
An essential tool in the analysis is the uniform control of the Lipschitz constants of the coefficients. The coefficients that both the classical and the modified formulation have in common are
\begin{align}
&\left\|K_{\nu_{+}(t)} - \frac{1}{M-1}E_t\mathcal{G}_t^TC^{-1}\right\|^2,\label{coeff1}\\
&\left\|K_{\nu_{+}(t)}\right\|^2,\label{coeff2}\\
&\left\| \frac{1}{M-1}E_t\mathcal{G}_t^TC^{-1}\right\|^2,\label{coeff3}
\end{align}
thus we provide the following preliminary estimates:

\begin{lemma}\label{coundCommonCoeff}
There exist constants $C_1, C_2, C_3$ independent of $h$ such that for both Algorithms \ref{ClassicalAlgo} and \ref{ModifiedAlgorithm} it holds
\begin{equation}\label{first}
\begin{aligned}
\left\|K_{\nu_{+}(t)} - \frac{1}{M-1}E_t\mathcal{G}_t^TC^{-1}\right\|^2&\leq C_1\left(h^2\left(\mathcal{V}_{\nu_{+}(t)}^{f}\right)^2\right.\\
&\hspace{1.5cm}\left. + \left(1+\mathcal{V}_t\right)\left(\frac{1}{M-1}\sum_{i=1}^M \left\|X_{\eta_{+}(t)}^{(i),f}-X_t^{(i)}\right\|^2\right)\right),
\end{aligned}
\end{equation}
\begin{align}
&\left\|K_{\nu_{+}(t)}\right\|^2\leq C_2 \mathcal{V}_{\nu_{+}(t)}^{f},\label{second}\\
&\left\| \frac{1}{M-1}E_t\mathcal{G}_t^TC^{-1}\right\|^2 \leq C_3 \mathcal{V}_t\label{third}
\end{align}
where
\begin{equation}
\begin{aligned}
\mathcal{V}_{\nu_{+}(t)}^{f} &:= \frac{1}{M-1} \sum_{i=1}^M \left\|X_{\eta_{+}(t)}^{(i),f} - \bar{x}_{\nu_{+}(t)}^{f}\right\|^2\\
\mathcal{V}_t &:= \frac{1}{M-1} \sum_{i=1}^M \left\|X_t^{(i)} - \bar{x}_t\right\|^2.
\end{aligned}
\end{equation}
\end{lemma}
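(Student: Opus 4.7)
The plan is to prove the three bounds by direct submultiplicativity of the Frobenius/operator norms, exploiting the Lipschitz continuity and boundedness of $g$ together with the resolvent identity $(C+B)^{-1} - C^{-1} = -(C+B)^{-1}BC^{-1}$. The basic identities I will invoke are $\|E_t\|^2 = (M-1)\mathcal{V}_t$ and $\|E_k^f\|^2 = (M-1)\mathcal{V}_{\nu_+(t)}^f$ (Frobenius norm), together with the two competing estimates
\[
\|\mathcal{G}_k^f\|^2 \leq 4M\|g\|_\infty^2 \quad \text{and} \quad \|\mathcal{G}_k^f\|^2 \leq 2(M-1)\|g\|_{\text{Lip}}^2\,\mathcal{V}_{\nu_+(t)}^f,
\]
and analogous estimates for $\mathcal{G}_t$. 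The Lipschitz form comes from the centering identity $\frac{1}{M}\sum_j\|X^{(i)}-X^{(j)}\|^2 = \|X^{(i)}-\bar x\|^2 + \frac{1}{M}\sum_j\|X^{(j)}-\bar x\|^2$ combined with the Lipschitz bound on $g$. Finally, positivity of $\tfrac{h}{M-1}\mathcal{G}_k^f(\mathcal{G}_k^f)^T$ gives $\|(C+\tfrac{h}{M-1}\mathcal{G}_k^f(\mathcal{G}_k^f)^T)^{-1}\| \leq \|C^{-1}\|$.

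Bounds \eqref{second} and \eqref{third} are then immediate by submultiplicativity. For \eqref{second}, using $\|E_k^f\|^2\leq (M-1)\mathcal{V}_{\nu_+(t)}^f$ and the supremum bound on $\|\mathcal{G}_k^f\|^2$ gives $\|K_{\nu_+(t)}\|^2 \leq \frac{4M\|g\|_\infty^2\|C^{-1}\|^2}{M-1}\mathcal{V}_{\nu_+(t)}^f$. Bound \eqref{third} follows along the same lines, directly, without invoking the resolvent.

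For \eqref{first}, I decompose
\[
K_{\nu_+(t)} - \tfrac{1}{M-1}E_t\mathcal{G}_t^T C^{-1} = \underbrace{\bigl(K_{\nu_+(t)} - \tfrac{1}{M-1}E_k^f(\mathcal{G}_k^f)^T C^{-1}\bigr)}_{(\mathrm{I})} + \underbrace{\tfrac{1}{M-1}\bigl(E_k^f(\mathcal{G}_k^f)^T - E_t\mathcal{G}_t^T\bigr)C^{-1}}_{(\mathrm{II})}.
\]
The resolvent identity rewrites $(\mathrm{I}) = -\tfrac{h}{(M-1)^2}E_k^f(\mathcal{G}_k^f)^T\bigl(C+\tfrac{h}{M-1}\mathcal{G}_k^f(\mathcal{G}_k^f)^T\bigr)^{-1}\mathcal{G}_k^f(\mathcal{G}_k^f)^T C^{-1}$; submultiplicativity produces a product $\|E_k^f\|^2\|\mathcal{G}_k^f\|^6$, and I use the Lipschitz estimate on $\|E_k^f\|^2$ and on exactly one of the three factors $\|\mathcal{G}_k^f\|^2$ (yielding two factors of $\mathcal{V}_{\nu_+(t)}^f$), and the $\infty$-bound on the remaining two factors. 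This delivers the desired $Ch^2(\mathcal{V}_{\nu_+(t)}^f)^2$ contribution. For $(\mathrm{II})$ I expand
\[
E_k^f(\mathcal{G}_k^f)^T - E_t\mathcal{G}_t^T = E_t(\mathcal{G}_k^f-\mathcal{G}_t)^T + (E_k^f-E_t)\mathcal{G}_t^T + (E_k^f-E_t)(\mathcal{G}_k^f-\mathcal{G}_t)^T,
\]
and use $\|E_k^f-E_t\|^2 \leq 4(M-1)\Delta^2$ and $\|\mathcal{G}_k^f-\mathcal{G}_t\|^2 \leq 4(M-1)\|g\|_{\text{Lip}}^2\Delta^2$, where $\Delta^2 := \tfrac{1}{M-1}\sum_i\|X_{\eta_+(t)}^{(i),f}-X_t^{(i)}\|^2$, while bounding the remaining factors $\|E_t\|^2$ and $\|\mathcal{G}_t\|^2$ by $(M-1)\mathcal{V}_t$ and $4M\|g\|_\infty^2$, respectively.

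The one subtlety I expect is the third cross term $(E_k^f-E_t)(\mathcal{G}_k^f-\mathcal{G}_t)^T$: using the Lipschitz estimate for both factors would produce a spurious $\Delta^4$ contribution, which cannot be absorbed into the right-hand side of \eqref{first}. This is avoided by using the supremum bound $\|\mathcal{G}_k^f-\mathcal{G}_t\|^2 \leq 16M\|g\|_\infty^2$ for this one factor, after which the term contributes only a constant multiple of $\Delta^2$. All three contributions to $(\mathrm{II})$ then combine to a bound of the form $C(1+\mathcal{V}_t)\Delta^2$, and \eqref{first} follows. The rest is routine bookkeeping of the constants $C_1,C_2,C_3$.
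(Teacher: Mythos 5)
Your proof is correct and follows essentially the same route as the paper: the identical splitting of $K_{\nu_+(t)}-\frac{1}{M-1}E_t\mathcal{G}_t^TC^{-1}$ into a resolvent-difference part (handled via $(C+B)^{-1}-C^{-1}=-(C+B)^{-1}BC^{-1}$ together with $\|(C+B)^{-1}\|\le\|C^{-1}\|$) and a product-difference part, with the same interplay of Lipschitz and supremum bounds on the $\mathcal{G}$-factors to arrive at the $h^2\bigl(\mathcal{V}_{\nu_+(t)}^f\bigr)^2$ and $(1+\mathcal{V}_t)\Delta^2$ contributions. The only cosmetic deviation is in the product-difference term, where the paper uses the two-term telescoping $ab-a'b'=(a-a')b+a'(b-b')$ (after recentering $\bar{g}$ to $g(\bar{x})$, which is legitimate since the columns of $E$ sum to zero) and therefore never encounters the cross term whose potential $\Delta^4$ contribution you correctly defuse with a supremum bound.
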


\medskip
\noindent
In both the classical and the modified formulation, we can control $\mathcal{V}_t$ as well as $\mathcal{V}_{\nu_{+}(t)}^{f}$ in a sense made precise in Appendix \ref{appendixVt} and Appendix \ref{appendixVk}, respectively.

\begin{proof}
On (\ref{first}):
\begin{equation}
\begin{aligned}
\text{(\ref{coeff1})}&= \left\|\frac{1}{M-1}E_{\nu_{+}(t)}^{f}\left(\mathcal{G}_{\nu_{+}(t)}^{f}\right)^T\left( \left(C+\frac{h}{M-1}\mathcal{G}_{\nu_{+}(t)}^{f}\left(\mathcal{G}_{\nu_{+}(t)}^{f}\right)^T\right)^{-1} - C^{-1}\right) \right.\\
&\hspace{1cm}\left. + \frac{1}{M-1} \left(E_{\nu_{+}(t)}^{f} \left(\mathcal{G}_{\nu_{+}(t)}^{f}\right)^T - E_t \mathcal{G}_t^T\right) C^{-1}\right\|^2\\
&\leq 2\left\|\frac{1}{M-1}E_{\nu_{+}(t)}^{f}\left(\mathcal{G}_{\nu_{+}(t)}^{f}\right)^T\right\|^2\left\| \left(C+\frac{h}{M-1}\mathcal{G}_{\nu_{+}(t)}^{f}\left(\mathcal{G}_{\nu_{+}(t)}^{f}\right)^T\right)^{-1} - C^{-1}\right\|^2 \\
&\hspace{0.5cm} + 2\left\|\frac{1}{M-1} \left(E_{\nu_{+}(t)}^{f} \left(\mathcal{G}_{\nu_{+}(t)}^{f}\right)^T - E_t \mathcal{G}_t^T\right) \right\|^2 \left\|C^{-1}\right\|^2.
\end{aligned}
\end{equation}

\medskip
\noindent
For this we estimate by boundedness of $g$
\begin{equation}\label{EGf}
\begin{aligned}
\left\|\frac{1}{M-1}E_{\nu_{+}(t)}^{f}\left(\mathcal{G}_{\nu_{+}(t)}^{f}\right)^T\right\| &= \left\|\frac{1}{M-1}\sum_{i=1}^M \left(X_{\eta_{+}(t)}^{(i),f} - \bar{x}_{\nu_{+}(t)}^{f}\right)\left(g\left(X_{\eta_{+}(t)}^{(i),f}\right)-\bar{g}_{\nu_{+}(t)}^{f}\right)^T\right\|\\
&\leq 2\|g\|_{\infty}\sqrt{\frac{M}{M-1}}\left(\mathcal{V}_{\nu_{+}(t)}^{f}\right)^{\frac{1}{2}}.
\end{aligned}
\end{equation}

\medskip
\noindent
Further
\begin{equation}
\begin{aligned}
&\left\| \left(C+\frac{h}{M-1}\mathcal{G}_{\nu_{+}(t)}^{f}\left(\mathcal{G}_{\nu_{+}(t)}^{f}\right)^T\right)^{-1} - C^{-1}\right\| \\
&= \left\|\left(C+\frac{h}{M-1}\mathcal{G}_{\nu_{+}(t)}^{f}\left(\mathcal{G}_{\nu_{+}(t)}^{f}\right)^T\right)^{-1} \left( {\rm Id} - \left(C+\frac{h}{M-1}\mathcal{G}_{\nu_{+}(t)}^{f}\left(\mathcal{G}_{\nu_{+}(t)}^{f}\right)^T\right)C^{-1}\right)\right\|\\
&= \left\|\left(C+\frac{h}{M-1}\mathcal{G}_{\nu_{+}(t)}^{f}\left(\mathcal{G}_{\nu_{+}(t)}^{f}\right)^T\right)^{-1} \left(-\frac{h}{M-1}\mathcal{G}_{\nu_{+}(t)}^{f}\left(\mathcal{G}_{\nu_{+}(t)}^{f}\right)^TC^{-1}\right)\right\|\\
&\leq  h \left\|C^{-1}\right\|^2 \left\|\frac{1}{M-1}\mathcal{G}_{\nu_{+}(t)}^{f}\left(\mathcal{G}_{\nu_{+}(t)}^{f}\right)^T\right\|\leq h \frac{4\|g\|_{\infty}^2\left\|C^{-1}\right\|^2M}{M-1}.
\end{aligned}
\end{equation}

\medskip
\noindent
Finally
\begin{equation}
\begin{aligned}
&\left\|\frac{1}{M-1} \left(E_{\nu_{+}(t)}^{f} \left(\mathcal{G}_{\nu_{+}(t)}^{f}\right)^T - E_t \mathcal{G}_t^T\right) \right\|\\
&= \left\| \frac{1}{M-1} \sum_{i=1}^M \left(\left(X_{\eta_{+}(t)}^{(i),f} - \bar{x}_{\nu_{+}(t)}^{f}\right) - \left(X_t^{(i)}-\bar{x}_t\right)\right)\left(g\left(X_{\eta_{+}(t)}^{(i),f}\right) - g\left(\bar{x}_{\nu_{+}(t)}^{f}\right)\right)^T\right.\\
&\hspace{3cm}\left. + \left(X_t^{(i)}-\bar{x}_t\right)\left(\left(g\left(X_{\eta_{+}(t)}^{(i),f}\right)-g\left(\bar{x}_{\nu_{+}(t)}^{f}\right)\right) - \left(g\left(X_t^{(i)}\right) - g\left(\bar{x}_t\right)\right)\right)^T\right\|\\
&\leq \left(\frac{1}{M-1}\sum_{i=1}^M \left\|\left(X_{\eta_{+}(t)}^{(i),f} - \bar{x}_{\nu_{+}(t)}^{f}\right) - \left(X_t^{(i)}-\bar{x}_t\right)\right\|^2\right)^{\frac{1}{2}}\left(\frac{1}{M-1}\sum_{i=1}^M \left\|g\left(X_{\eta_{+}(t)}^{(i),f}\right) - g\left(\bar{x}_{\nu_{+}(t)}^{f}\right)\right\|^2\right)^{\frac{1}{2}}\\
&\hspace{0.5cm} + \left(\frac{1}{M-1}\sum_{i=1}^M \left\|X_t^{(i)}-\bar{x}_t\right\|^2\right)^{\frac{1}{2}}\\
&\hspace{2cm}\left(\frac{1}{M-1}\sum_{i=1}^M \left\|\left(g\left(X_{\eta_{+}(t)}^{(i),f}\right)-g\left(\bar{x}_{\nu_{+}(t)}^{f}\right)\right) - \left(g\left(X_t^{(i)}\right) - g\left(\bar{x}_t\right)\right)\right\|^2\right)^{\frac{1}{2}}\\
&\leq 2\left(2\|g\|_{\infty}\sqrt{\frac{M}{M-1}} + \|g\|_{\text{Lip}}\left(\mathcal{V}_t\right)^{\frac{1}{2}}\right)\left(\frac{1}{M-1} \sum_{i=1}^M \left\|X_{\eta_{+}(t)}^{(i),f} - X_t^{(i)}\right\|^2\right)^{\frac{1}{2}}.
\end{aligned}
\end{equation}

\medskip
\noindent
In total we obtain
\begin{equation}\label{estimateKkclassical}
\begin{aligned}
&\left\|K_{\nu_{+}(t)} - \frac{1}{M-1}E_t\mathcal{G}_t^TC^{-1}\right\|^2\\
&\leq 16\left\|C^{-1}\right\|^2\left(h^2\frac{8\|g\|_{\infty}^4\|g\|_{\text{Lip}}^2\left\|C^{-1}\right\|^2M^2}{(M-1)^2}\left(\mathcal{V}_{\nu_{+}(t)}^{f}\right)^2 \right.\\
&\hspace{2.5cm}\left.+ \left(\frac{4\|g\|_{\infty}^2M}{M-1} + \|g\|_{\text{Lip}}^2\mathcal{V}_t\right)\left(\frac{1}{M-1} \sum_{i=1}^M \left\|X_{\eta_{+}(t)}^{(i),f} - X_t^{(i)}\right\|^2\right)\right).
\end{aligned}
\end{equation}

\medskip
\noindent
On (\ref{second}): using (\ref{EGf}) we obtain
\begin{equation}\label{estimateKk}
\begin{aligned}
\left\|K_{\nu_{+}(t)}\right\| &\leq \left\|\frac{1}{M-1}E_{\nu_{+}(t)}^{f}\left(\mathcal{G}_{\nu_{+}(t)}^{f}\right)^T\right\|\left\|C^{-1}\right\|\\
&\leq 2\|g\|_{\infty}\left\|C^{-1}\right\|\sqrt{\frac{M}{M-1}} \left(\mathcal{V}_{\nu_{+}(t)}^{f}\right)^{\frac{1}{2}},
\end{aligned}
\end{equation}
i.e.
\begin{equation}\label{K2}
\left\|K_{\nu_{+}(t)}\right\|^2 \leq \frac{4\|g\|_{\infty}^2\left\|C^{-1}\right\|^2M}{M-1}\mathcal{V}_{\nu_{+}(t)}^{f}.
\end{equation}

\medskip
\noindent
On (\ref{third}): similar to (\ref{EGf}) we obtain
\begin{equation}\label{Kcont}
\left\| \frac{1}{M-1}E_t\mathcal{G}_t^TC^{-1}\right\|^2 \leq \frac{4\|g\|_{\infty}^2\left\|C^{-1}\right\|^2M}{M-1}\mathcal{V}_t.
\end{equation}
\end{proof}

\subsection{Proof of Theorem \ref{ClassicalLimit}}

\noindent
Note that pathwise it holds by H\"older-continuity of $W^{(i)}$, $V^{(i)}$, and $Y$
\begin{equation}
\begin{aligned}
\left\|X_{\eta(t)}^{(i),a} - \widehat{X}_t^{(i)}\right\| &\leq h \left(\|f\|_{\infty} + \|g\|_{\infty}\left\|K_{\nu_{+}(t)}\right\|\right)\\
&\hspace{0.5cm} + h^{\gamma}\left(\left\|Q^{\frac{1}{2}}\right\| \left\|W^{(i)}\right\|_{\text{H\"ol}} + \|Y\|_{\text{H\"ol}}\left\|K_{\nu_{+}(t)}\right\| + \left\|C^{\frac{1}{2}}\right\|\left\|V^{(i)}\right\|_{\text{H\"ol}}\left\|K_{\nu_{+}(t)}\right\|\right)
\end{aligned}
\end{equation}
where $\gamma <\frac{1}{2}$, and thus
\begin{equation}\label{firstest}
\begin{aligned}
\left\|X_{\eta(t)}^{(i),a} - \widehat{X}_t^{(i)}\right\|^2 &\leq 5\left( h^2 \left(\|f\|_{\infty}^2 + \|g\|_{\infty}^2\left\|K_{\nu_{+}(t)}\right\|^2\right)\right.\\
&\hspace{1cm} + h^{2\gamma}\left(\left\|Q^{\frac{1}{2}}\right\|^2 \left\|W^{(i)}\right\|_{\text{H\"ol}}^2 + \|Y\|_{\text{H\"ol}}^2\left\|K_{\nu_{+}(t)}\right\|^2 \right.\\
&\hspace{5cm}\left.\left.+ \left\|C^{\frac{1}{2}}\right\|^2\left\|V^{(i)}\right\|_{\text{H\"ol}}^2\left\|K_{\nu_{+}(t)}\right\|^2\right)\right).
\end{aligned}
\end{equation}
By Lemma \ref{coundCommonCoeff} and Appendix \ref{appendixVk}, $\left\|K_k\right\|^2$ is bounded in expectation for any $k$ and we get
\begin{equation}\label{analyzedContembClassical}
\mathds{E}\left[\left\|X_{\eta(t)}^{(i),a} - \widehat{X}_t^{(i)}\right\|^2\right] \in O\left(h^{2\gamma}\right).
\end{equation}

\medskip
\noindent
Further observe that the observation process $Y$ can be written as
\begin{equation}
{\rm d}Y_t = g\left(X_t^{\text{ref}}\right){\rm d}t + C^{\frac{1}{2}}{\rm d}V_t
\end{equation}
where $X^{\text{ref}}$ is the reference trajectory that generates the observations. Using this in (\ref{classicalSemimartingale}) we obtain by the It\^{o} formula
\begin{equation}
\begin{aligned}
&\frac{1}{2}{\rm d}\left\|\widehat{X}_t^{(i)}-X_t^{(i)}\right\|^2\\
&= \left \langle \widehat{X}_t^{(i)} - X_t^{(i)}, f\left(\widehat{X}_t^{(i)}\right) - f\left(X_t^{(i)}\right) \right \rangle {\rm d}t\\
&\hspace{0.5cm} - \left \langle \widehat{X}_t^{(i)} - X_t^{(i)}, \frac{1}{M-1} E_t \mathcal{G}_t^T C^{-1}\left(g\left(\widehat{X}_t^{(i)}\right)-g\left(X_t^{(i)}\right)\right)\right \rangle {\rm d}t\\
&\hspace{0.5cm} + \left \langle \widehat{X}_t^{(i)} - X_t^{(i)}, f\left(X_{\eta(t)}^{(i),a}\right) - f\left(\widehat{X}_t^{(i)}\right) + \left(K_{\nu_{+}(t)} - \frac{1}{M-1}E_t\mathcal{G}_t^TC^{-1}\right)g\left(X_t^{\text{ref}}\right) \right.\\
&\hspace{3.5cm}\left.- \left(K_{\nu_{+}(t)}g\left(X_{\eta_{+}(t)}^{(i),f}\right) - \frac{1}{M-1}E_t\mathcal{G}_t^TC^{-1}g\left(\widehat{X}_t^{(i)}\right)\right)\right \rangle {\rm d}t\\
&\hspace{0.5cm} + \text{tr}\left( \left(K_{\nu_{+}(t)} - \frac{1}{M-1}E_t\mathcal{G}_t^TC^{-1}\right)C\left(K_{\nu_{+}(t)} - \frac{1}{M-1} E_t\mathcal{G}_t^TC^{-1}\right)^T\right){\rm d}t\\
&\hspace{0.5cm} + \left \langle \widehat{X}_t^{(i)} - X_t^{(i)}, \left(K_{\nu_{+}(t)}-\frac{1}{M-1}E_t\mathcal{G}_t^TC^{-1}\right)C^{\frac{1}{2}}\left({\rm d}V_t + {\rm d}V_t^{(i)}\right)\right \rangle\\
&\leq \left(\frac{1}{2} + \|f\|_{\text{Lip}} + \|g\|_{\text{Lip}}^2\left\|C^{-1}\right\|\mathcal{V}_t\right)\left\|\widehat{X}_t^{(i)}-X_t^{(i)}\right\|^2{\rm d}t\\
&\hspace{0.5cm}+\left( 2\|f\|_{\text{Lip}}^2\left\|X_{\eta(t)}^{(i),a}-\widehat{X}_t^{(i)}\right\|^2 + 2\|g\|_{\text{Lip}}^2\left\|K_{\nu_{+}(t)}\right\|^2\left\|X_{\eta_{+}(t)}^{(i),f}-\widehat{X}_t^{(i)}\right\|^2 \right.\\
&\hspace{1.5cm}\left.+ \left(4\|g\|_{\infty}^2+\|C\|\right)\left\|K_{\nu_{+}(t)}-\frac{1}{M-1}E_t\mathcal{G}_t^TC^{-1}\right\|_{F}^2\right){\rm d}t\\
&\hspace{0.5cm} + \left \langle \widehat{X}_t^{(i)} - X_t^{(i)}, \left(K_{\nu_{+}(t)}-\frac{1}{M-1}E_t\mathcal{G}_t^TC^{-1}\right)C^{\frac{1}{2}}\left({\rm d}V_t + {\rm d}V_t^{(i)}\right)\right \rangle.
\end{aligned}
\end{equation}

\medskip
\noindent
Observe that
\begin{equation}
\begin{aligned}
&\sum_{i=1}^M \left\|X_{\eta_{+}(t)}^{(i),f} - \widehat{X}_t^{(i)}\right\|^2\\
&= \sum_{i=1}^M \left\|X_{\eta(t)}^{(i),a} - \widehat{X}_t^{(i)} + hf\left(X_{\eta(t)}^{(i),a}\right) + Q^{\frac{1}{2}}\tilde{W}_{\nu_{+}(t)}^{(i)}\right\|^2\\
&\leq 3\left(\sum_{i=1}^M \left\|X_{\eta(t)}^{(i),a} - \widehat{X}_t^{(i)}\right\|^2 + h^2\|f\|_{\infty}^2M + \left\|Q^{\frac{1}{2}}\right\|^2 \sum_{i=1}^M \left\|\tilde{W}_{\nu_{+}(t)}^{(i)}\right\|^2\right)
\end{aligned}
\end{equation}
by boundedness of $f$. With this and Lemma \ref{coundCommonCoeff} we obtain in total
\begin{equation}
\frac{1}{2}{\rm d}\sum_{i=1}^M \left\|\widehat{X}_t^{(i)} - X_t^{(i)}\right\|^2 \leq L(t)\sum_{i=1}^M \left\|\widehat{X}_t^{(i)}-X_t^{(i)}\right\|^2 {\rm d}t + R(t){\rm d}t + {\rm d}\mathcal{N}_t
\end{equation}
\noindent
where
\begin{equation}
\begin{aligned}
L(t) &:= \frac{1}{2}+\|f\|_{\text{Lip}} + \|g\|_{\text{Lip}}^2\|C^{-1}\|\mathcal{V}_t  + \kappa \left(\frac{4\|g\|_{\infty}^2M}{M-1} + \|g\|_{\text{Lip}}^2\mathcal{V}_t\right),\\
R(t) &:= \left(2\|f\|_{\text{Lip}}^2 + 6\|g\|_{\text{Lip}}^2\left\|K_{\nu_{+}(t)}\right\|^2+ \kappa\left(\frac{4\|g\|_{\infty}^2M}{M-1} + \|g\|_{\text{Lip}}^2\mathcal{V}_t\right)\right)\sum_{i=1}^M \left\|X_{\eta(t)}^{(i),a} - \widehat{X}_t^{(i)}\right\|^2\\
&\hspace{0.5cm} + 6h^2\|f\|_{\infty}^2M + 6\left\|Q^{\frac{1}{2}}\right\|^2\sum_{i=1}^M \left\|\tilde{W}_{\nu_{+}(t)}^{(i)}\right\|^2\\
&\hspace{0.5cm} + \kappa\left(h^2\frac{2\|g\|_{\infty}^4\|g\|_{\text{Lip}}^2\left\|C^{-1}\right\|^2M}{M-1}\left(\mathcal{V}_{\nu_{+}(t)}^{f}\right)^2\right.\\
&\hspace{2cm}\left. + \left(\frac{4\|g\|_{\infty}^2M}{M-1} + \|g\|_{\text{Lip}}^2\mathcal{V}_t\right)\left(h^2\|f\|_{\infty}^2M+\left\|Q^{\frac{1}{2}}\right\|^2\sum_{i=1}^M\left\|\tilde{W}_{\nu_{+}(t)}^{(i)}\right\|^2\right)\right),\\
{\rm d}\mathcal{N}_t &:= \sum_{i=1}^M \left \langle \widehat{X}_t^{(i)} - X_t^{(i)}, \left(K_{\nu_{+}(t)}-\frac{1}{M-1}E_t\mathcal{G}_t^TC^{-1}\right)C^{\frac{1}{2}}\left({\rm d}V_t + {\rm d}V_t^{(i)}\right)\right \rangle
\end{aligned}
\end{equation}
\medskip
\noindent
with $\kappa := 64\|C^{-1}\|^2\left(4\|g\|_{\infty}^2+\|C\|\right)\frac{M}{M-1}$. By the It\^{o} product rule
\begin{equation}
\begin{aligned}
&{\rm d} \text{e}^{-\int_0^t2L(s){\rm d}s}\left(\sum_{i=1}^M \left\|\widehat{X}_t^{(i)}-X_t^{(i)}\right\|^2\right) \leq 2\text{e}^{-\int_0^t2L(s){\rm d}s}\left(R(t){\rm d}t + {\rm d}\mathcal{N}_t\right)\\
\Rightarrow &\mathds{E}\left[\text{e}^{-\int_0^t2L(s){\rm d}s}\left(\sum_{i=1}^M \left\|\widehat{X}_t^{(i)}-X_t^{(i)}\right\|^2\right)\right]\\
&\leq \mathds{E}\left[\sum_{i=1}^M \left\|\widehat{X}_0^{(i)}-X_0^{(i)}\right\|^2\right] + 2\mathds{E}\left[\int_0^t\text{e}^{-\int_0^s2L(r){\rm d}r}R(s){\rm d}s\right]\\
&\leq \mathds{E}\left[\sum_{i=1}^M \left\|\widehat{X}_0^{(i)}-X_0^{(i)}\right\|^2\right] + 2\int_0^t\mathds{E}\left[R(s)\right]{\rm d}s.
\end{aligned}
\end{equation}
Now use (\ref{firstest}) as well as Appendix \ref{appendixVkClassical} and Remark \ref{RemarkVClassical} for
\begin{equation}
\mathcal{V}_t\left\|K_{\nu_{+}(t)}\right\|^2 \leq \frac{1}{2}\left(\mathcal{V}_t\right)^2 + \frac{1}{2}\left\|K_{\nu_{+}(t)}\right\|^4
\end{equation}
to deduce that it holds
\begin{equation}
\sup_{t\in [0,T]}\mathds{E}\left[R(t)\right] \in O\left(h^{2\gamma}\right).
\end{equation}
By the assumption on the initial conditions this then yields
\begin{equation}
\sup_{t\in[0,T]} \mathds{E}\left[\text{e}^{-\int_0^t2L(s){\rm d}s}\left(\sum_{i=1}^M \left\|\widehat{X}_t^{(i)}-X_t^{(i)}\right\|^2\right)\right] \in O\left(h^{2\gamma}\right)
\end{equation}
and hence the claim
\begin{equation}
\sup_{t\in [0,T]} \mathds{E}\left[\text{e}^{-\int_0^t2L(s)ds}\left(\sum_{i=1}^M \left\|X_{\eta(t)}^{(i),a}-X_t^{(i)}\right\|^2\right)\right] \in O\left(h^{2\gamma}\right).
\end{equation}

\subsection{Proof of Theorem \ref{ModifiedLimit}}

\noindent
First of all observe from (\ref{modifiedSemimartingale})
\begin{equation}
\begin{aligned}
&X_{\eta(t)}^{(i)}-X_t^{(i)}\\
&= - \int_{\eta(t)}^t f\left(X_s^{(i)}\right)+\frac{1}{2}QP_s^{-1}\left(X_s^{(i)}-\bar{x}_s\right)- \frac{1}{2}\frac{1}{M-1}E_s\mathcal{G}_s^TC^{-1}\left(g\left(X_s^{(i)}\right)+\bar{g}_s\right){\rm d}s\\
&\hspace{0.5cm} - \int_{\eta(t)}^t \frac{1}{M-1}E_s\mathcal{G}_s^TC^{-1}{\rm d}Y_s\\
&= - \int_{\eta(t)}^t f\left(X_s^{(i)}\right)+\frac{1}{2}QP_s^{-1}\left(X_s^{(i)}-\bar{x}_s\right)+\frac{1}{M-1}E_s\mathcal{G}_s^TC^{-1}g\left(X_s^{\text{ref}}\right)\\
&\hspace{1.5cm}- \frac{1}{2}\frac{1}{M-1}E_s\mathcal{G}_s^TC^{-1}\left(g\left(X_s^{(i)}\right)+\bar{g}_s\right){\rm d}s\\
&\hspace{0.5cm} - \int_{\eta(t)}^t \frac{1}{M-1}E_s\mathcal{G}_s^TC^{-\frac{1}{2}}{\rm d}V_s.
\end{aligned}
\end{equation}
With this we obtain
\begin{equation}
\begin{aligned}
&\frac{1}{M-1}\sum_{i=1}^M \left\|X_{\eta(t)}^{(i)} - X_t^{(i)}\right\|^2 \\
&\leq \frac{2}{M-1}\sum_{i=1}^M \left\|\int_{\eta(t)}^t f\left(X_s^{(i)}\right)+\frac{1}{2}QP_s^{-1}\left(X_s^{(i)}-\bar{x}_s\right)+\frac{1}{M-1}E_s\mathcal{G}_s^TC^{-1}g\left(X_s^{\text{ref}}\right)\right.\\
&\hspace{3cm}\left.- \frac{1}{2}\frac{1}{M-1}E_s\mathcal{G}_s^TC^{-1}\left(g\left(X_s^{(i)}\right)+\bar{g}_s\right){\rm d}s\right\|^2\\
&\hspace{0.5cm}+\frac{2M}{M-1}\left\|\int_{\eta(t)}^t \frac{1}{M-1}E_s\mathcal{G}_s^TC^{-\frac{1}{2}}{\rm d}V_s\right\|^2\\
&=: (I) + (II).
\end{aligned}
\end{equation}
Using the estimate (\ref{third}) in combination with Appendix \ref{appendixVtmod} as well as boundedness of $f$ and $g$ we deduce
\begin{equation}
(I) \in O\left(h^2\right).
\end{equation}
Further we obtain that the process
\[\int_0^{\cdot} \frac{1}{M-1}E_s\mathcal{G}_s^TC^{\frac{1}{2}}{\rm d}V_s\]
is H\"older-continuous with coefficient $\rho < \frac{1}{2}$ (cf. \cite{roynette1993}). In total, this yields
\begin{equation}
\frac{1}{M-1}\sum_{i=1}^M \left\|X_{\eta(t)}^{(i)} - X_t^{(i)}\right\|^2 \in O\left(h^{2\rho}\right).
\end{equation}

\medskip
\noindent
Now let $1\leq k \leq L$, then by using the semimartingale representation of $Y$ and applying the Cauchy-Schwarz-inequality, we obtain

\begin{equation}
\begin{aligned}
&\frac{1}{M-1} \sum_{i=1}^M \left\|X_{t_k}^{(i),a} - X_{t_k}^{(i)}\right\|^2\\
&= \frac{1}{M-1}\sum_{i=1}^M \left\|X_0^{(i),a} - X_0^{(i)}+\int_0^{t_k} f\left(X_{\eta(s)}^{(i),a}\right)-f\left(X_s^{(i)}\right){\rm d}s\right.\\
&\hspace{2.6cm} + \int_0^{t_k} \frac{1}{2}Q\left(\left(P_{\nu(s)}^{a}\right)^{-1}\left(X_{\eta(s)}^{(i),a}-\bar{x}_{\nu(s)}^{a}\right)-P_s^{-1}\left(X_s^{(i)}-\bar{x}_s\right)\right){\rm d}s\\
&\hspace{2.6cm} + \int_0^{t_k}\left(K_{\nu_+(s)}-\frac{1}{M-1}E_s\mathcal{G}_s^TC^{-1}\right)g\left(X_s^{\text{ref}}\right){\rm d}s\\
&\hspace{2.6cm} + \int_0^{t_k}\left(K_{\nu_+(s)}-\frac{1}{M-1}E_s\mathcal{G}_s^TC^{-1}\right)C^{\frac{1}{2}}{\rm d}V_s\\
&\hspace{2.6cm} - \int_0^{t_k}\frac{1}{2}\left(K_{\nu_+(s)}\left(g\left(X_{\eta_+(s)}^{(i),f}\right)+\bar{g}_{\nu_+(s)}^{f}\right)\right.\\
&\hspace{5cm}\left.\left.-\frac{1}{M-1}E_s\mathcal{G}_s^TC^{-1}\left(g\left(X_s^{(i)}\right)+\bar{g}_s\right)\right){\rm d}s\right\|^2\\
&\leq 6 \left(\frac{1}{M-1}\sum_{i=1}^M \left\|X_0^{(i),a} - X_0^{(i)}\right\|^2 + (1) + (2) + (3) + (4) + (5)\right).
\end{aligned}
\end{equation}
Term $(1)$ can be estimated by using the Lipschitz-continuity of $f$.\\
\noindent
In $(2)$, the integrand can be decomposed in the following way:
\begin{equation}
\begin{aligned}
&\left(P_{\nu(s)}^{a}\right)^{-1}\left(X_{\eta(s)}^{(i),a}-\bar{x}_{\nu(s)}^{a}\right)-P_s^{-1}\left(X_s^{(i)}-\bar{x}_s\right)\\
&= \left(\left(P_{\nu(s)}^{a}\right)^{-1}-P_s^{-1}\right)\left(X_{\eta(s)}^{(i),a}-\bar{x}_{\nu(s)}^{a}\right) +P_s^{-1}\left(\left(X_{\eta(s)}^{(i),a}-\bar{x}_{\nu(s)}^{a}\right)-\left(X_s^{(i)}-\bar{x}_s\right)\right)\\
&= -\left(P_{\nu(s)}^{a}\right)^{-1}\left(P_{\nu(s)}^{a}-P_s\right)P_s^{-1}\left(X_{\eta(s)}^{(i),a}-\bar{x}_{\nu(s)}^{a}\right) +P_s^{-1}\left(\left(X_{\eta(s)}^{(i),a}-\bar{x}_{\nu(s)}^{a}\right)-\left(X_s^{(i)}-\bar{x}_s\right)\right).
\end{aligned}
\end{equation}
It holds with $\mathcal{V}^{a}$ as defined in (\ref{VkaWk})
\begin{equation}
\left\|P_{\nu(s)}^{a}-P_s\right\|^2 \leq 8 \left(\mathcal{V}_{\nu(s)}^{a} + \mathcal{V}_s\right)\left(\frac{1}{M-1}\sum_{i=1}^M\left\|X_{\nu(s)}^{(i),a}-X_s^{(i)}\right\|^2\right)
\end{equation}
which yields by Appendix \ref{appendixVt} and Appendix \ref{appendixVkModified}
\begin{equation}
(2) \leq \frac{2t_k\|Q\|^2}{\left(\lambda_T^{*}\right)^2}\left(\frac{v_{T}^{a,*}(v_{T}^{a,*}+v_T^*)}{(p_T^*)^2}+1\right)\int_0^{t_k} \frac{1}{M-1}\sum_{i=1}^M \left\|X_{\eta(s)}^{(i),a}-X_s^{(i)}\right\|^2{\rm d}s.
\end{equation}

\medskip
\noindent
Term (3), (4) and (5) can be estimated via Lemma \ref{coundCommonCoeff} where in (4) we apply the Burkholder-Davis-Gundy inequality to obtain for any $t \in [0,T]$

\begin{equation}
\begin{aligned}
&\mathds{E}\left[\left\|\sup_{s\in [0,t]} \int_0^s \left(K_{\nu_+(r)}-\frac{1}{M-1}E_r\mathcal{G}_r^TC^{-1}\right)C^{\frac{1}{2}}{\rm d}V_r\right\|^2\right]\\
&\leq C_{BDG} \left\|C^{\frac{1}{2}}\right\|^2\int_0^t \mathds{E}\left[\left\|K_{\nu_+(s)}-\frac{1}{M-1}E_s\mathcal{G}_s^TC^{-1}\right\|^2\right]{\rm d}s\\
&\leq 16C_{BDG}\left\|C^{-1}\right\|^2 \left\|C^{\frac{1}{2}}\right\|^2\\
&\hspace{0.5cm}\left(h^2 t\frac{8\|g\|_{\infty}^6\left\|C^{-1}\right\|^2v_{T}^{f,*}M^3}{(M-1)^3} + \|g\|_{\text{Lip}}^2\left(v_{T}^{f,*}+v_T^*\right)\int_0^t\mathds{E}\left[\frac{1}{M-1}\sum_{i=1}^M \left\|X_{\eta_+(s)}^{(i),f}-X_s^{(i)}\right\|^2\right]{\rm d}s\right).
\end{aligned}
\end{equation}
\noindent
Further it holds by the forecast step (\ref{centerModF})
\begin{equation}
\begin{aligned}
&\frac{1}{M-1}\sum_{i=1}^M \left\|X_{\eta_+(s)}^{(i),f}-X_s^{(i)}\right\|^2\\
&\leq 3\left(\frac{1}{M-1}\sum_{i=1}^M\left\|X_{\eta(s)}^{(i),a}-X_s^{(i)}\right\|^2\right) + 3h^2\left(\frac{\|f\|_{\infty}^2M}{M-1}+\frac{\|Q\|^2}{4(p_T^*)^2}v_{T}^{a,*}\right)\\
&=: 3\left(\frac{1}{M-1}\sum_{i=1}^M\left\|X_{\eta(s)}^{(i),a}-X_s^{(i)}\right\|^2\right) + 3h^2r^*.
\end{aligned}
\end{equation}
In total we obtain
\begin{equation}
\begin{aligned}
&\mathds{E}\left[\sup_{t\in[0,T]}\frac{1}{M-1}\sum_{i=1}^M \left\|X_{\eta(t)}^{(i),a}-X_{\eta(t)}^{(i)}\right\|^2\right]\\
&\leq 6 \mathds{E}\left[\frac{1}{M-1}\sum_{i=1}^M \left\|X_0^{(i),a}-X_0^{(i)}\right\|^2\right]\\
&\hspace{0.5cm} + L_T \int_0^T \mathds{E}\left[\sup_{r \in [0,s]}\frac{1}{M-1}\sum_{i=1}^M \left\|X_{\eta(r)}^{(i),a} - X_r^{(i)}\right\|^2\right]{\rm d}s + R_T h^2
\end{aligned}
\end{equation}
where
\begin{equation}
\begin{aligned}
L_T &:= 18\|g\|_{\text{Lip}}^2\left(v_{T}^{f,*}+v_T^*\right)+6T\left(\|f\|_{\text{Lip}}^2 + \frac{2\|Q\|^2}{\left(\lambda_T^{*}\right)^2}\left(\frac{v_{T}^{a,*}(v_{T}^{a,*}+v_T^*)}{(p_T^*)^2}+1\right) \right.\\
&\hspace{6cm}\left.+ 24\frac{\|g\|_{\infty}^2\left\|C^{-1}\right\|^2M}{M-1}\|g\|_{\text{Lip}}^2\left(6v_{T}^{f,*}+7v_T^*\right)\right),
\end{aligned}
\end{equation}

\begin{equation}
\begin{aligned}
R_T &:= 48h^2 T\frac{\left\|C^{-1}\right\|^2 M}{M-1}\\
&\hspace{0.5cm}\left(\left(3T\|g\|_{\infty}^2+C_{BDG}\left\|C^{\frac{1}{2}}\right\|^2\right)\left(\frac{16\|g\|_{\infty}^6\left\|C^{-1}\right\|^2M^3}{(M-1)^3}v_{T}^{f,*} + 6\|g\|_{\text{Lip}}^2\left(v_{T}^{f,*}+v_T^*\right)r^*\right)\right.\\
&\hspace{1cm}\left.+ 3T\|g\|_{\infty}^2\|g\|_{\text{Lip}}^2v_T^*r^*\right).
\end{aligned}
\end{equation}
Now since by the initial discussion of this subsection and the Burkholder-Davis-Gundy inequality it holds
\begin{equation}
\mathds{E}\left[\sup_{t\in[0,T]} \frac{1}{M-1}\sum_{i=1}^M\left\|X_{\eta(t)}^{(i)}-X_t^{(i)}\right\|^2\right] \in O(h),
\end{equation}
we obtain by assumption on the initial conditions and by a Gronwall argument a constant $\tilde{C}_T>0$ such that
\begin{equation}
\mathds{E}\left[\sup_{t\in[0,T]}\frac{1}{M-1}\sum_{i=1}^M \left\|X_{\eta(t)}^{(i),a}-X_t^{(i)}\right\|^2\right] \leq \tilde{C}_Th
\end{equation}
which concludes the proof.

\section{Conclusion and Outlook}

\noindent
The Ensemble Kalman Filter is a powerful tool in the field of data assimilation, and its numerics are widely explored when applied to a variety of high-dimensional models as such arising in the geosciences. Our understanding of its theoretical properties, however, is still rather limited. Some investigations in this direction used the continuous version of the Ensemble Kalman Filter in context of observations arriving with high-frequency, and present first results on stability and accuracy. In this work, we investigated how the discrete filtering scheme used in the numerics and the continuous filtering scheme used for the mathematical analysis relate to each other in that we conducted a continuous time limit argument. In the literature, first results on this can be found in \cite{kelly2014}, as well as \cite{schillings2017} and \cite{bloemker2018} in the context of inverse problems, but these lack a rigorous proof and effective rates for the limit. Similar to \cite{kelly2014}, we first considered the classical Ensemble Kalman Filter formulation which uses a stochastically perturbed observation ensemble, and our result reads as follows:

\smallskip
\noindent
Choosing the Euler-Maruyama approximation of the signal and the observation as their respective time-discretization, the expected ensemble-mean-square error of the corresponding Ensemble Kalman Filter formulation and the classical continuous version converges to zero uniformly in time with convergence rate given in Theorem \ref{ClassicalLimit}.

\smallskip
\noindent
In the attempt of proving better rates, the stochastic perturbation of the observations posed a serious problem. When omitting this perturbation step the convergence speed can be improved; according to \cite{burgers1998}, however, such a perturbation is necessary for the algorithm to give the correct statistics of the ensemble.

\smallskip
\noindent
These findings led us to consider alternative versions of the Ensemble Kalman Filter that do not rely on the stochastic perturbation step and still produce the correct statistics. Recently in \cite{deWiljes2017}, the authors analyzed a modified continuous Ensemble Kalman Filter scheme originating from deterministic transformations of the ensemble. The very promising stability and accuracy properties shown in \cite{deWiljes2017} proposed this formulation as an appropriate candidate for our investigations. We summarize our result to:

\smallskip
\noindent
Choosing the Euler-Maruyama approximation of the signal and the observation as their respective time-discretization, the ensemble-mean-square error of the corresponding modified Ensemble Kalman Filter formulation and the modified continuous version converges to zero locally uniformly in time in expectation with convergence rate given in Theorem \ref{ModifiedLimit}.

\smallskip
\noindent
One may further consider investigating the convergence of a mixture of both formulations as mentioned in \cite{bergemann2012} where the classical forecast step is combined with the modified update step.\\
The extension to unbounded observation maps $g$ remains an open problem at present. Work is in progress though on the linear case.

\medskip 
\noindent 
{\bf Acknowlegdement} This research has been funded by Deutsche Forschungsgemeinschaft (DFG) through grant CRC 1294 "Data Assimilation", Project (A02) "Long-time stability and accuracy of ensemble transform filter algorithms".

\appendix

\section{Control of the continuous-time processes}\label{appendixVt}

\subsection{Classical formulation}

\begin{lemma}\label{boundVtclassical}
$(\mathcal{V}_t)$ can be controlled $\omega$-wise locally in time $t$. Precisely, it holds
\begin{equation}
\sup_{t \in [0,T]}\mathcal{V}_t \leq e^{AT}\mathcal{V}_0 + \mathcal{C} + e^{AT} \sup_{t \in [0,T]} \int_0^t e^{-As}{\rm d}N_s
\end{equation}
for constants $A = A(f)$ and $\mathcal{C} = \mathcal{C}(Q,f,T)$, and a martingale $N$.
\end{lemma}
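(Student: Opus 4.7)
The strategy is two-fold: first verify the Itô-type differential inequality
\[d\mathcal{V}_t \leq 2(Lf)_+ \mathcal{V}_t\, dt + \mathrm{tr}(Q)\, dt + dN_t\]
displayed immediately above the lemma statement in the main text, and then integrate it pathwise via a deterministic Gronwall trick on $e^{-At}\mathcal{V}_t$.

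For the differential inequality I would start from (\ref{classical}), subtract from it the equation for $\bar{x}_t$ (obtained by ensemble-averaging), and apply Itô's formula to $\|X_t^{(i)} - \bar{x}_t\|^2$. Summing over $i$, the drift associated with $f$ is rewritten, using $\sum_i(X_t^{(i)} - \bar{x}_t) = 0 = \sum_i(f(X_t^{(i)}) - \bar{f}_t)$, as the symmetric bilinear form $\frac{1}{2M}\sum_{i,j}\langle X_t^{(i)} - X_t^{(j)}, f(X_t^{(i)}) - f(X_t^{(j)})\rangle$; combined with the definition (\ref{dissipativityf}) and the elementary identity $\sum_{i,j}\|X_t^{(i)} - X_t^{(j)}\|^2 = 2M(M-1)\mathcal{V}_t$, this bounds the $f$-contribution to $d\mathcal{V}_t$ by $2(Lf)_+ \mathcal{V}_t\, dt$. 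The observation drift and the observation-noise Itô correction partially cancel: the drift yields $-\frac{2}{(M-1)^2}\mathrm{tr}(E_t\mathcal{G}_t^T C^{-1}\mathcal{G}_t E_t^T)\, dt$ whereas the quadratic-variation correction arising from the $V_t^{(i)} - \bar{V}_t$ terms yields $+\frac{1}{(M-1)^2}\mathrm{tr}(E_t\mathcal{G}_t^T C^{-1}\mathcal{G}_t E_t^T)\, dt$ (the extra factor $\tfrac{1}{M}$ coming from the variance reduction $(M-1)/M$ of $V_t^{(i)} - \bar{V}_t$ times $M$ summands), so their sum is non-positive and can be dropped. The signal Itô correction contributes exactly $\mathrm{tr}(Q)\, dt$, and the remaining stochastic differentials regroup into $dN_t$ as displayed in the main text.

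For the Gronwall step, set $A := 2(Lf)_+$, which is finite and depends only on $f$ (bounded by $2\|f\|_{\mathrm{Lip}}$). Itô's product rule applied to $e^{-At}\mathcal{V}_t$ together with the above inequality gives pathwise
\[d\bigl(e^{-At}\mathcal{V}_t\bigr) \leq \mathrm{tr}(Q) e^{-At}\, dt + e^{-At}\, dN_t,\]
so that integration and multiplication by $e^{At}$ produce
\[\mathcal{V}_t \leq e^{At}\mathcal{V}_0 + \frac{\mathrm{tr}(Q)}{A}\bigl(e^{At} - 1\bigr) + e^{At}\int_0^t e^{-As}\, dN_s.\]
Taking $\sup_{t\in [0,T]}$ and setting $\mathcal{C} := \frac{\mathrm{tr}(Q)}{A}\bigl(e^{AT} - 1\bigr)$ (interpreted as $T\,\mathrm{tr}(Q)$ if $A = 0$) produces the claimed bound.

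The main technical point is the partial cancellation between the observation drift and its Itô correction: once this is identified, the closed-form upper bound depends only on $f, Q, T$ and the rest is a clean pathwise Gronwall argument. A minor auxiliary issue is showing that $N$ is a genuine martingale on $[0,T]$ rather than only a local martingale, which follows from a standard stopping-time argument exploiting the boundedness of $g$ together with the very pathwise bound being derived.
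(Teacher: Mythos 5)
Your proposal is correct and follows essentially the same route as the paper: It\^{o}'s formula for $\mathcal{V}_t$, the observation that the gain-drift term $-\tfrac{2}{(M-1)^2}\mathrm{tr}\left(E_t\mathcal{G}_t^TC^{-1}\mathcal{G}_tE_t^T\right)$ dominates the corresponding It\^{o} correction $+\tfrac{1}{(M-1)^2}\mathrm{tr}\left(E_t\mathcal{G}_t^TC^{-1}\mathcal{G}_tE_t^T\right)$ (via the variance $\tfrac{M-1}{M}t$ of $V^{(i)}-\bar{v}$), and a pathwise Gronwall argument on $e^{-2(Lf)_{+}t}\mathcal{V}_t$. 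The only cosmetic difference is in the treatment of the $f$-drift: you symmetrize over pairs $(i,j)$, whereas the paper recenters at the mean using $\sum_{i}\langle X_t^{(i)}-\bar{x}_t, f(\bar{x}_t)-\bar{f}_t\rangle=0$; both yield the bound $2(Lf)_{+}\mathcal{V}_t$.
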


\begin{proof}
First observe the following: since $\left(W^{(i)}\right)_{i=1,...,M}$ and $\left(V^{(i)}\right)_{i=1,...,M}$ are independent standard Brownian motions, the quadratic variation for each $i=1,...,M$ reads
\begin{equation}\label{quadVariationNoiseW}
\left \langle W^{(i)} - \bar{w}\right \rangle_t = \frac{M-1}{M}t = \left \langle V^{(i)} - \bar{v}\right \rangle_t.
\end{equation}
\noindent
By the It\^{o} formula we thus obtain
\begin{equation}
\begin{aligned}
{\rm d} \mathcal{V}_t &= \frac{1}{M-1} \sum_{i=1}^M {\rm d}\left\|X_t^{(i)}-\bar{x}_t\right\|^2\\
&= \frac{1}{M-1} \sum_{i=1}^M 2 \left\langle X_t^{(i)}-\bar{x}_t, f\left(X_t^{(i)}\right)-\bar{f}_t \right\rangle {\rm d}t\\
&\hspace{2.5cm}+ 2 \left \langle X_t^{(i)} - \bar{x}_t, Q^{\frac{1}{2}}\left({\rm d}W_t^{(i)} - {\rm d}\bar{w}_t\right) \right \rangle\\
&\hspace{2.5cm}+ 2 \left \langle X_t^{(i)} - \bar{x}_t, \frac{1}{M-1} E_t \mathcal{G}_t^T C^{-\frac{1}{2}}\left({\rm d} V_t^{(i)} - {\rm d}\bar{v}_t\right) \right \rangle\\
&\hspace{2.5cm}- 2 \left \langle X_t^{(i)} - \bar{x}_t, \frac{1}{M-1} E_t \mathcal{G}_t^T C^{-1} \left(g\left(X_t^{(i)}\right) - \bar{g}_t\right) \right \rangle {\rm d}t\\
&\hspace{0.5cm} + \text{tr}(Q) {\rm d}t + \frac{1}{(M-1)^2} \text{tr}\left(E_t \mathcal{G}_t^T C^{-1} \mathcal{G}_t E_t^T \right) {\rm d}t.
\end{aligned}
\end{equation}
Observe now that
\begin{equation}\label{Observation1}
\sum_{i=1}^M \left \langle X_t^{(i)} - \bar{x}_t, f(\bar{x}_t) - \bar{f}_t \right \rangle = 0
\end{equation}
and thus
\begin{equation}\label{Observation1.2}
\sum_{i=1}^M \left \langle X_t^{(i)} - \bar{x}_t, f\left(X_t^{(i)}\right) - \bar{f}_t \right \rangle = \sum_{i=1}^M \left \langle X_t^{(i)} - \bar{x}_t, f\left(X_t^{(i)}\right) - f(\bar{x}_t) \right \rangle.
\end{equation}

\medskip
\noindent
Further it holds
\begin{equation}
\begin{aligned}
&\frac{1}{M-1}\sum_{i=1}^M \left \langle X_t^{(i)} - \bar{x}_t, \frac{1}{M-1} E_t \mathcal{G}_t^T C^{-1} \left(g\left(X_t^{(i)}\right) - \bar{g}_t\right) \right \rangle \\
&\hspace{1cm}=\frac{1}{M-1}\sum_{i=1}^M \frac{1}{M-1}\text{tr}\left(\left(X_t^{(i)}-\bar{x}_t\right)\left(g\left(X_t^{(i)}\right)-\bar{g}_t\right)^TC^{-1}\mathcal{G}_tE_t^T\right)\\
&\hspace{1cm}= \frac{1}{(M-1)^2}\text{tr}\left(E_t\mathcal{G}_t^TC^{-1}\mathcal{G}_tE_t^T\right) = \left\|\frac{1}{M-1} E_t \mathcal{G}_t^T C^{-\frac{1}{2}}\right\|_F^2 \geq 0.
\end{aligned}
\end{equation}
\noindent
Thus
\begin{equation}
\begin{aligned}
{\rm d} \mathcal{V}_t &=\frac{1}{M-1} \sum_{i=1}^M 2 \left\langle X_t^{(i)}-\bar{x}_t, f\left(X_t^{(i)}\right)-f\left(\bar{x}_t\right) \right\rangle {\rm d}t\\
&\hspace{2.5cm}+ 2 \left \langle X_t^{(i)} - \bar{x}_t, Q^{\frac{1}{2}}\left({\rm d}W_t^{(i)} - {\rm d}\bar{w}_t\right) \right \rangle\\
&\hspace{2.5cm}+ 2 \left \langle X_t^{(i)} - \bar{x}_t, \frac{1}{M-1} E_t \mathcal{G}_t^T C^{-\frac{1}{2}}\left({\rm d} V_t^{(i)} - {\rm d}\bar{v}_t\right) \right \rangle\\
&\hspace{0.5cm} + \text{tr}(Q) {\rm d}t -\left\|\frac{1}{M-1} E_t \mathcal{G}_t^T C^{-\frac{1}{2}}\right\|_F^2 {\rm d}t.
\end{aligned}
\end{equation}
Therefore with the martingale $N$ with
\begin{equation}
{\rm d}N_t =  \frac{2}{M-1} \sum_{i=1}^M \left \langle X_t^{(i)} - \bar{x}_t, Q^{\frac{1}{2}}{\rm d}W_t^{(i)} + \frac{1}{M-1}E_t\mathcal{G}_t^TC^{-\frac{1}{2}}{\rm d}V_t^{(i)}\right \rangle
\end{equation}
we estimate
\begin{equation}\label{inequalityVt}
{\rm d} \mathcal{V}_t \leq 2(Lf)_{+} \mathcal{V}_t {\rm d}t + \text{tr}(Q) {\rm d}t + {\rm d}N_t.
\end{equation}
\medskip
\noindent
Applying the It\^{o} product rule yields
\begin{equation}
\begin{aligned}
{\rm d}\left( e^{-2(Lf)_{+}t}\mathcal{V}_t\right) &\leq e^{-2(Lf)_{+}t}\left(\text{tr}(Q){\rm d}t + {\rm d}N_t\right)
\end{aligned}
\end{equation}
and therefore it holds
\begin{equation}\label{Vest}
\mathcal{V}_t \leq e^{At}\mathcal{V}_0 + \frac{\text{tr}(Q)}{2(Lf)_{+}}\left(e^{2(Lf)_{+}t}-1\right) + \int_0^t e^{2(Lf)_{+}(t-s)}{\rm d}N_s.
\end{equation}
\noindent
Note that
\[\left( \int_0^t e^{-2(Lf)_{+}s}{\rm d}N_s\right)_{t \geq 0}\]
is a continuous martingale and thus locally bounded in time $t$. Hence we can control $\mathcal{V}_t$ $\omega$-wise locally in $t$, since for any $T > 0$ we get
\begin{equation}
\sup_{t \in [0,T]}\mathcal{V}_t \leq e^{2(Lf)_{+}T}\mathcal{V}_0 + \frac{\text{tr}(Q)}{2(Lf)_{+}}\left(e^{2(Lf)_{+}T}-1\right) + e^{2(Lf)_{+}T} \sup_{t \in [0,T]} \int_0^t e^{-2(Lf)_{+}s}{\rm d}N_s.
\end{equation}
\end{proof}
\begin{remark}\label{RemarkVClassical}
Since one can estimate by boundedness of $g$
\begin{equation*}
\frac{{\rm d}}{{\rm d}t} \langle N\rangle_t \leq \frac{4}{(M-1)^2}\left(\left\|Q^{\frac{1}{2}}\right\|^2 + \frac{4M\|g\|_{\infty}^2\left\|C^{-\frac{1}{2}}\right\|^2}{M-1}\mathcal{V}_t\right)\mathcal{V}_t,
\end{equation*}
one can further show by using Equation (\ref{Vest}) and a Gronwall argument that
\begin{equation}\label{Vsecondmom}
\sup_{t\in[0,T]}\mathds{E}\left[\left(\mathcal{V}_t\right)^2\right] < \infty.
\end{equation}
\end{remark}

\subsection{Modified formulation}\label{appendixVtmod}

\begin{lemma}\label{boundVtModified}
$\mathcal{V}_t$ is bounded for all $t \in [0,T]$, i.e.
\begin{equation}
\mathcal{V}_t \leq v_T^{*} := e^{2(Lf)_{+}T}\mathcal{V}_0 + \frac{{\rm tr}(Q)}{2(Lf)_{+}}\left(e^{2(Lf)_{+}T}-1\right).
\end{equation}
\end{lemma}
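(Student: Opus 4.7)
The plan is to repeat the computation from the proof of Lemma \ref{boundVtclassical}, but for the modified dynamics (\ref{modified}). The decisive observation is that in the modified formulation all noise terms have been replaced by bounded-variation corrections, so the evolution of $\mathcal{V}_t$ is purely deterministic: no martingale $N$ appears, and the resulting differential inequality can be integrated pathwise to a uniform bound.

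First I would derive the SDE for the centred ensemble members. Subtracting the mean equation
\[
{\rm d}\bar{x}_t = \bar{f}_t\,{\rm d}t + \tfrac{1}{M-1}E_t\mathcal{G}_t^T C^{-1}({\rm d}Y_t - \bar{g}_t\,{\rm d}t)
\]
from (\ref{modified}) cancels the $\mathrm{d}Y_t$ contribution and yields
\[
{\rm d}(X_t^{(i)}-\bar{x}_t) = (f(X_t^{(i)})-\bar{f}_t)\,{\rm d}t + \tfrac{1}{2}QP_t^{-1}(X_t^{(i)}-\bar{x}_t)\,{\rm d}t - \tfrac{1}{2}\tfrac{1}{M-1}E_t\mathcal{G}_t^T C^{-1}(g(X_t^{(i)})-\bar{g}_t)\,{\rm d}t.
\]
Applying the chain rule to $\|X_t^{(i)}-\bar{x}_t\|^2$, summing over $i$ and dividing by $M-1$ gives $\mathrm{d}\mathcal{V}_t$ as a sum of three terms.

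The three terms I would then handle as follows. (i) The drift term is bounded via (\ref{Observation1.2}) and the definition (\ref{dissipativityf}) of $(Lf)_+$ by $2(Lf)_+\mathcal{V}_t\,{\rm d}t$. (ii) The crucial $Q$-term simplifies exactly: using $\langle a,Mb\rangle = \mathrm{tr}(M ba^T)$,
\[
\sum_{i=1}^M \left\langle X_t^{(i)}-\bar{x}_t,\, QP_t^{-1}(X_t^{(i)}-\bar{x}_t)\right\rangle = \mathrm{tr}\!\left(QP_t^{-1}\sum_{i=1}^M (X_t^{(i)}-\bar{x}_t)(X_t^{(i)}-\bar{x}_t)^T\right) = (M-1)\,\mathrm{tr}(Q),
\]
so this term contributes exactly $\mathrm{tr}(Q)\,\mathrm{d}t$ — precisely mirroring the $\mathrm{tr}(Q)\,\mathrm{d}t$ quadratic-variation contribution from $Q^{1/2}\,\mathrm{d}W_t^{(i)}$ in the classical case, which is the whole point of the noise replacement (\ref{noiseReplacement}). (iii) The observation-correction term equals $-\tfrac{1}{2}\|\tfrac{1}{M-1}E_t\mathcal{G}_t^T C^{-1/2}\|_F^2\,{\rm d}t$ (same trace computation as in the classical proof), which is non-positive and can be discarded.

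Combining, I obtain the deterministic differential inequality
\[
\frac{\mathrm{d}\mathcal{V}_t}{\mathrm{d}t} \leq 2(Lf)_+\mathcal{V}_t + \mathrm{tr}(Q).
\]
Multiplying by $e^{-2(Lf)_+ t}$ and integrating (or invoking Gronwall) yields $\mathcal{V}_t \leq e^{2(Lf)_+ t}\mathcal{V}_0 + \frac{\mathrm{tr}(Q)}{2(Lf)_+}(e^{2(Lf)_+ t}-1)$ for every $\omega$, and taking the supremum over $t\in[0,T]$ gives the stated bound $v_T^*$. There is no real obstacle: the work is essentially the same algebra as Lemma \ref{boundVtclassical}, only cleaner because the absence of Brownian noise eliminates the $\mathrm{d}N_t$ term and makes the estimate sup-in-$\omega$ rather than merely locally-in-time $\omega$-wise.
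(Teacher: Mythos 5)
Your proposal is correct and follows essentially the same route as the paper: differentiate $\mathcal{V}_t$ along the (noise-free) modified dynamics, identify the $\tfrac12 QP_t^{-1}$-term with $\tfrac12\mathrm{tr}(Q)$ via the trace identity, drop the non-positive observation-correction term, bound the drift by $(Lf)_+\mathcal{V}_t$, and conclude by Gronwall. No discrepancies worth noting.
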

\begin{proof}
It holds
\begin{equation}
\begin{aligned}
\frac{1}{2} \frac{{\rm d}}{{\rm d}t} \mathcal{V}_t &= \frac{1}{M-1}\sum_{i=1}^M \left\langle X_t^{(i)} - \bar{x}_t, f\left(X_t^{(i)}\right) - \bar{f}_t\right \rangle\\
&\hspace{2.5cm}+\left \langle X_t^{(i)}-\bar{x}_t, \frac{1}{2}QP_t^{-1}\left(X_t^{(i)}-\bar{x}_t\right)\right \rangle\\
&\hspace{2.5cm}- \left \langle X_t^{(i)} - \bar{x}_t, \frac{1}{2}\frac{1}{M-1}E_t\mathcal{G}_t^TC^{-1}\left(g\left(X_t^{(i)}\right)-\bar{g}_t\right)\right\rangle\\
&= \frac{1}{M-1}\sum_{i=1}^M \left\langle X_t^{(i)} - \bar{x}_t, f\left(X_t^{(i)}\right) - f\left(\bar{x}_t\right)\right \rangle\\
&\hspace{0.5cm} + \frac{\text{tr}(Q)}{2} - \frac{1}{2}\frac{1}{(M-1)^2}\text{tr}\left( E_t\mathcal{G}_t^TC^{-1}\mathcal{G}_tE_t^T\right)\\
&= \frac{1}{M-1}\sum_{i=1}^M \left\langle X_t^{(i)} - \bar{x}_t, f\left(X_t^{(i)}\right) - f\left(\bar{x}_t\right)\right \rangle + \frac{\text{tr}(Q)}{2} - \frac{1}{2}\left\|\frac{1}{M-1} E_t\mathcal{G}_t^TC^{-\frac{1}{2}}\right\|_{F}^2\\
&\leq (Lf)_{+} \mathcal{V}_t + \frac{\text{tr}(Q)}{2}
\end{aligned}
\end{equation}
which yields the claim after applying a Gronwall argument.
\end{proof}

\medskip
\noindent
Recall the evolution equation of $P_t$
\begin{equation}\label{PtEvolution}
\begin{aligned}
\frac{{\rm d}}{{\rm d}t} P_t &= \frac{1}{M-1} \sum_{i=1}^M \left( \left(f\left(X_t^{(i)}\right)-\bar{f}_t\right)\left(X_t^{(i)}-\bar{x}_t\right)^T + \left(X_t^{(i)}-\bar{x}_t\right)\left(f\left(X_t^{(i)}\right)-\bar{f}_t\right)^T\right)\\
&\hspace{0.5cm} + Q - \frac{1}{(M-1)^2}E_t\mathcal{G}_t^T C^{-1} \mathcal{G}_t E_t^T.
\end{aligned}
\end{equation}
Further let $\lambda_t^{\min}$ denote the smallest eigenvalue of $P_t$ for each $t \in [0,T]$.

\begin{lemma}\label{boundLambdaModified}
If $\lambda_0^{\min}$ is bounded away from 0, then there exists a constant $\lambda_T^{*} > 0$ bounded away from 0 such that
\begin{equation}
\lambda_t^{\min} \geq \lambda_T^{*} \hspace{1cm} \forall t \in [0,T].
\end{equation}
\end{lemma}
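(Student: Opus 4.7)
The plan is to exploit the variational characterization $\lambda_t^{\min} = \inf_{\|v\|=1}\langle v, P_tv\rangle$: for each fixed unit vector $v \in \mathbb{R}^d$ I will show that the scalar $u_t(v) := \langle v, P_tv\rangle$ cannot drop below a positive threshold that is uniform in $v$. The key structural fact to exploit is that $Q$ is strictly positive definite, so the evolution (\ref{PtEvolution}) of $P_t$ contains a source term contributing at least $\lambda^{\min}(Q)>0$ in every direction, while the remaining terms turn out to vanish in a controlled manner as $u_t(v)\downarrow 0$.

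Concretely, for fixed unit $v$ I differentiate $u_t(v)$ along (\ref{PtEvolution}). The $f$-drift, after centering via $\sum_i(X_t^{(i)}-\bar x_t)=0$, equals
\[
\frac{2}{M-1}\sum_{i=1}^M \langle v, f(X_t^{(i)})-f(\bar x_t)\rangle\,\langle v, X_t^{(i)}-\bar x_t\rangle,
\]
which by Cauchy--Schwarz and the Lipschitz bound on $f$ is bounded below by $-2\|f\|_{\text{Lip}}\sqrt{\mathcal V_t\,u_t(v)}$. The diffusion term contributes $\langle v, Qv\rangle \geq \lambda^{\min}(Q)$. The ``gain'' term $\frac{1}{(M-1)^2}\langle v, E_t\mathcal G_t^TC^{-1}\mathcal G_tE_t^Tv\rangle$ I rewrite as $\|C^{-1/2}\,\tfrac{1}{M-1}\mathcal G_tE_t^Tv\|^2$ and bound by Cauchy--Schwarz in the sum over ensemble members together with $\|g\|_\infty<\infty$ by $Ku_t(v)$ with $K := 4\|g\|_\infty^2 M\|C^{-1}\|/(M-1)$. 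Combining these with the pathwise bound $\mathcal V_t \leq v_T^*$ from Lemma \ref{boundVtModified} gives the scalar differential inequality
\[
\frac{d}{dt}u_t(v) \geq \phi(u_t(v)), \qquad \phi(u) := \lambda^{\min}(Q) - 2\|f\|_{\text{Lip}}\sqrt{v_T^*}\,\sqrt{u} - Ku.
\]

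Since $\phi$ is strictly decreasing on $[0,\infty)$ with $\phi(0)=\lambda^{\min}(Q)>0$ and $\phi(u)\to-\infty$, there is a unique $u^*>0$ with $\phi(u^*)=0$ depending only on $T,Q,f,g,C,M$. A barrier argument --- at any hypothetical first time $t_0$ at which $u_t(v)$ would drop below $\min(u_0(v), u^*)$, one has $\phi(u_{t_0}(v))>0$ and hence $u_t(v)$ is strictly increasing there, contradicting the descent --- yields $u_t(v)\geq \min(u_0(v), u^*)$ throughout $[0,T]$. Taking the infimum over unit $v$ and using the assumption $\lambda_0^{\min}>0$, we conclude $\lambda_t^{\min}\geq \lambda_T^* := \min(\lambda_0^{\min}, u^*)>0$.

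The main obstacle I foresee is obtaining a \emph{linear}-in-$u_t(v)$ upper bound on the gain contribution: this linearity is precisely what makes $\phi(u)\to\lambda^{\min}(Q)>0$ as $u\downarrow 0$, and hence what drives the barrier argument. Producing it requires grouping $\mathcal G_tE_t^Tv$ as a single $p$-vector and applying Cauchy--Schwarz so that one factor yields $u_t(v)^{1/2}$ while the other is controlled by a constant via $\|g\|_\infty$ (alternatively by $\mathcal V_t$ via the Lipschitz bound, which works equally well since $\mathcal V_t\leq v_T^*$); tracking the $(M-1)^{-2}$ normalization through this step is the main bookkeeping hurdle.
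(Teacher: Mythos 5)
Your proof is correct and follows essentially the same route as the paper: a differential inequality for the quadratic form $\langle P_t v,v\rangle$ along the evolution (\ref{PtEvolution}), with $\langle Qv,v\rangle\geq\lambda^{\min}(Q)$ as the positive source, the drift term controlled by $-2\|f\|_{\text{Lip}}\sqrt{\mathcal V_t}\,\langle P_tv,v\rangle^{1/2}$ and the gain term by a constant times $\langle P_tv,v\rangle$ (the paper uses $\|g\|_{\text{Lip}}^2\mathcal V_t\|C^{-1}\|$ where you use $\|g\|_\infty$; both work given Lemma \ref{boundVtModified}). The only execution differences are immaterial: the paper diagonalizes $P_t=U_t^T\Lambda_tU_t$ and tracks the minimal eigenvalue directly (following Lemma 6 of \cite{deWiljes2017}), then linearizes the square-root term via Young's inequality and applies Gronwall, whereas you fix a unit vector, run a barrier argument on the nonlinear inequality $\dot u\geq\phi(u)$, and take the infimum over $v$ at the end --- which, if anything, sidesteps the delicacy of differentiating eigenvalues.
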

\noindent
Note that, consequently, $\left\|P_t^{-1}\right\|$ is bounded from above by $\left(\lambda_T^{*}\right)^{-1}$.

\begin{proof}
First of all observe that for any $v \in \mathds{R}^{d}$ it holds
\begin{equation}\label{Pvv}
\left \langle P_t v, v\right\rangle = \frac{1}{M-1}\sum_{i=1}^M \left\langle X_t^{(i)}-\bar{x}_t,v\right\rangle^2.
\end{equation}

\medskip
\noindent
We proceed as in the proof of Lemma 6 in \cite{deWiljes2017}. Consider a diagonalization of $P_t$, i.e. orthogonal matrices $U_t$ and diagonal matrices $\Lambda_t$ such that
\begin{equation}
P_t = U_t^T \Lambda_t U_t.
\end{equation}
Then by Equation (\ref{PtEvolution})
\begin{equation}
\begin{aligned}
\frac{{\rm d}}{{\rm d}t} \Lambda_t &= \text{diag}\left(U_t\left(\frac{1}{M-1} \sum_{i=1}^M \left(f\left(X_t^{(i)}\right)-\bar{f}_t\right)\left(X_t^{(i)}-\bar{x}_t\right)^T \right.\right.\\
&\hspace{7cm}\left.\left.+ \left(X_t^{(i)}-\bar{x}_t\right)\left(f\left(X_t^{(i)}\right)-\bar{f}_t\right)^T\right)U_t^T\right)\\
&\hspace{0.5cm} + \text{diag}\left(U_tQU_t^T\right) - \text{diag}\left(U_t\left(\frac{1}{(M-1)^2}E_t\mathcal{G}_t^T C^{-1} \mathcal{G}_t E_t^T\right)U_t^T\right).
\end{aligned}
\end{equation}
Note that for any matrix $A\in \mathds{R}^{d\times d}$
\begin{equation}
\left(\text{diag}\left(U_tAU_t^T\right)\right)_{ll} = e_l^T U_t A U_t^T e_l
\end{equation}
and $\left\|U_t^T e_l\right\| = 1$ with $(e_l)$ the standard orthonormal basis in $\mathds{R}^d$.\\

\noindent
Let $v_l = U_t^T e_l$, then $\|v_l\| = 1$ and we estimate
\begin{equation}
\begin{aligned}
&\left| \left(\text{diag}\left(U_t \frac{1}{M-1} \sum_{i=1}^M \left(f\left(X_t^{(i)}\right) - \bar{f}_t\right)\left(X_t^{(i)}-\bar{x}_t\right)^T U_t^T\right)\right)_{ll}\right|\\
&=\frac{1}{M-1}\sum_{i=1}^M \left \langle f\left(X_t^{(i)}\right)-f\left(\bar{x}_t\right), v_l\right \rangle \left \langle X_t^{(i)}-\bar{x}_t,v_l\right \rangle\\
&\leq \left(\frac{1}{M-1}\sum_{i=1}^M \left \langle f\left(X_t^{(i)}\right)-f\left(\bar{x}_t\right),v_l\right \rangle^2\right)^{\frac{1}{2}}\left(\frac{1}{M-1}\sum_{i=1}^M \left\langle X_t^{(i)}-\bar{x}_t,v_l\right\rangle^2\right)^{\frac{1}{2}}\\
&\leq \|f\|_{\text{Lip}}\left(\mathcal{V}_t\right)^{\frac{1}{2}}\left \langle P_t v_l, v_l\right\rangle^{\frac{1}{2}}.
\end{aligned}
\end{equation}

\medskip
\noindent
Further
\begin{equation}
\begin{aligned}
&\left|\left(\text{diag}\left(U_t \frac{1}{(M-1)^2} E_t \mathcal{G}_t^T C^{-1} \mathcal{G}_t E_t^T U_t^T\right)\right)_{ll}\right| = \frac{1}{(M-1)^2} \left \langle C^{-1} \mathcal{G}_t E_t^T v_l, \mathcal{G}_t E_t^T v_l\right \rangle\\
&\leq \left\|C^{-1}\right\|\left\|\frac{1}{M-1} \mathcal{G}_tE_t^Tv_l\right\|^2\\
&\leq \left\|C^{-1}\right\|\left(\frac{1}{M-1}\sum_{i=1}^M \left\|g\left(X_t^{(i)}\right)-g\left(\bar{x}_t\right)\right\|^2\right)\left(\frac{1}{M-1}\sum_{i=1}^M \left\langle X_t^{(i)}-\bar{x}_t,v_l\right\rangle^2\right)\\
&\leq \left\|C^{-1}\right\|\|g\|_{\text{Lip}}^2\mathcal{V}_t\left\langle P_t v_l, v_l \right \rangle.
\end{aligned}
\end{equation}
Thus by Young's inequality we obtain for some $\epsilon > 0$
\begin{equation}
\begin{aligned}
\frac{{\rm d}}{{\rm d}t} \left(\Lambda_t\right)_{ll} &\geq -2\|f\|_{\text{Lip}}\sqrt{v_T^*}\left \langle P_t v_l, v_l\right \rangle^{\frac{1}{2}} + \lambda^{\min}(Q) - \|g\|_{\text{Lip}}^2\left\|C^{-1}\right\|v_T^*\left\langle P_t v_l, v_l\right \rangle\\
&\geq \lambda^{\min}(Q) - 2\|f\|_{\text{Lip}}^2v_T^*\epsilon - \left(\frac{2}{\epsilon}+\|g\|_{\text{Lip}}^2\left\|C^{-1}\right\|v_T^*\right)\left\langle P_t v_l, v_l\right \rangle.
\end{aligned}
\end{equation}

\medskip
\noindent
Choose $\epsilon >0$ small enough such that
\begin{equation}
q_{\epsilon} := \lambda^{\min}(Q) - 2\|f\|_{\text{Lip}}^2v_T^*\epsilon > 0.
\end{equation}
Now since $\left \langle P_t v_l, v_l \right \rangle = \left(\Lambda_t\right)_{ll}$, it holds for $l$ such that $\lambda_t^{\min} = \left(\Lambda_t\right)_{ll}$ and with
\begin{equation}
\alpha_{\epsilon} := \frac{2}{\epsilon}+\|g\|_{\text{Lip}}^2\left\|C^{-1}\right\|v_T^*
\end{equation}
that
\begin{equation}
\frac{{\rm d}}{{\rm d}t} \lambda_t^{\min} \geq q_{\epsilon} - \alpha_{\epsilon}\lambda_t^{\min},
\end{equation}
thus by a Gronwall argument there exists $\lambda_{T,\epsilon}^* >0$ such that if $\lambda_0^{\min} > 0$ it holds
\begin{equation}
\lambda_t^{\min} \geq \lambda_{T,\epsilon}^*.
\end{equation}
\end{proof}

\section{Control of the discrete-time processes}\label{appendixVk}

\subsection{Classical formulation}\label{appendixVkClassical}

\noindent
By Algorithm \ref{ClassicalAlgo} we obtain for any $k$

\begin{equation}\label{recursiveVf}
\begin{aligned}
\mathcal{V}_{k+1}^{(i),f} &= \frac{1}{M-1} \sum_{i=1}^M \left\|X_{t_{k+1}}^{(i),f} - \bar{x}_{k+1}^{f}\right\|^2\\
&= \frac{1}{M-1} \sum_{i=1}^M \left\|X_{t_k}^{(i),a} - \bar{x}_k^{a}\right\|^2 + 2 h \left \langle X_{t_k}^{(i),a}-\bar{x}_k^{a},f\left(X_{t_k}^{(i),a}\right)-f\left(\bar{x}_k^{a}\right)\right \rangle \\
&\hspace{2.5cm}+ 2\left \langle X_{t_k}^{(i),a} - \bar{x}_k^{a}, Q^{\frac{1}{2}}\left(\tilde{W}_{k+1}^{(i)} - \bar{w}_{k+1}\right)\right \rangle + h^2 \left\|f\left(X_{t_k}^{(i),a}\right) - \bar{f}_k^{a}\right\|^2 \\
&\hspace{2.5cm}+ 2h \left\langle f\left(X_{t_k}^{(i),a}\right) - f\left(\bar{x}_k^{a}\right), Q^{\frac{1}{2}}\left(\tilde{W}_{k+1}^{(i)}-\bar{w}_{k+1}\right)\right \rangle \\
&\hspace{2.5cm}+ \left\|Q^{\frac{1}{2}}\left(\tilde{W}_{k+1}^{(i)}-\bar{w}_{k+1}\right)\right\|^2.
\end{aligned}
\end{equation}
Thus for
\begin{equation}\label{VkaWk}
\mathcal{V}_k^{a} := \frac{1}{M-1}\sum_{i=1}^M \left\|X_{t_k}^{(i),a} - \bar{x}_k^{a}\right\|^2,\hspace{1cm} \mathds{W}_k := \frac{1}{M-1} \sum_{i=1}^M \left\|\tilde{W}_k^{(i)}-\bar{w}_k\right\|^2
\end{equation}
this gives by the Cauchy-Schwarz-inequality
\begin{equation}
\mathcal{V}_{k+1}^{f} \leq \left(2+2h(Lf)_{+} + 5h^2\|f\|_{\text{Lip}}^2\right)\mathcal{V}_k^{a} + 3\left\|Q^{\frac{1}{2}}\right\|^2\mathds{W}_{k+1}
\end{equation}
which yields with the estimate (\ref{K2})
\begin{equation}\label{Ksquared}
\begin{aligned}
\left\|K_{k+1}\right\|^2 &\leq \frac{4\|g\|_{\infty}^2\left\|C^{-1}\right\|^2M}{M-1}\left(\left(2+2h(Lf)_{+} + 5h^2\|f\|_{\text{Lip}}^2\right)\mathcal{V}_k^{a} + 3\left\|Q^{\frac{1}{2}}\right\|^2\mathds{W}_{k+1}\right)\\
&\leq: \tilde{K}_1\left(\tilde{K}_2(h)\mathcal{V}_k^{a}+\left\|Q^{\frac{1}{2}}\right\|^2\mathds{W}_{k+1}\right).
\end{aligned}
\end{equation}
By taking expectation in Equation (\ref{recursiveVf}), we furthermore estimate
\begin{equation}
\mathds{E}\left[\mathcal{V}_{k+1}^{f}\right] \leq \left(1+2h(Lf)_{+} + 4h^2\|f\|_{\text{Lip}}^2\right)\mathds{E}\left[\mathcal{V}_k^{a}\right] + \left\|Q^{\frac{1}{2}}\right\|^2\mathds{E}\left[\mathds{W}_{k+1}\right]
\end{equation}
by independence of $X_{t_k}^{(i),a}$ and $\tilde{W}_{k+1}^{(i)}$. Similar to the continuous case it holds
\begin{equation}
\tilde{W}_{k+1}^{(i)} - \bar{w}_{k+1} \sim \mathcal{N}\left(0, \frac{M-1}{M}h{\rm Id}\right)
\end{equation}
and thus $\mathds{E}\left[\mathds{W}_{k+1}\right] = h$, i.e.
\begin{equation}
\mathds{E}\left[\mathcal{V}_{k+1}^{f}\right] \leq \left(1+2h(Lf)_{+} + 4h^2\|f\|_{\text{Lip}}^2\right)\mathds{E}\left[\mathcal{V}_k^{a}\right] + \left\|Q^{\frac{1}{2}}\right\|^2h
\end{equation}
and
\begin{equation}
\begin{aligned}
\mathds{E}\left[\left\|K_{k+1}\right\|^2\right] &\leq \frac{4\|g\|_{\infty}^2\left\|C^{-1}\right\|^2M}{M-1}\left(\left(1+2h(Lf)_{+} + 4h^2\|f\|_{\text{Lip}}^2\right)\mathds{E}\left[\mathcal{V}_k^{a}\right] + \left\|Q^{\frac{1}{2}}\right\|^2h\right)\\
&=: \tilde{K}_3\left(\tilde{K}_4(h)\mathds{E}\left[\mathcal{V}_k^{a}\right] + \left\|Q^{\frac{1}{2}}\right\|^2h\right).
\end{aligned}
\end{equation}
\noindent
Thus $\mathds{E}\left[\left\|K_{k+1}\right\|^2\right]$ as well as $\mathds{E}\left[\left\|K_{k+1}\right\|^4\right]$ are locally bounded in $k$ according to the following lemma:
\begin{lemma}\label{MomentsVka}
If $\mathds{E}\left[\mathcal{V}_0^{a}\right] < \infty$ and $\mathds{E}\left[\left(\mathcal{V}_0^{a}\right)^2\right] < \infty$, then first and second moment of $\mathcal{V}_k^{a}$ are locally bounded in $k$, i.e. it for $h\ll 1$ there exist constants $\mathcal{C}_1$, $\mathcal{C}_2$, $\mathcal{C}_3$, and $\mathcal{C}_4$, all independent of $h$, such that
\begin{equation}
\mathds{E}\left[\mathcal{V}_k^{a}\right] \leq \text{e}^{\mathcal{C}_1T}\left(\mathds{E}\left[\mathcal{V}_0^{a}\right] + T\mathcal{C}_2\right)
\end{equation}
as well as
\begin{equation}
\mathds{E}\left[\left(\mathcal{V}_k^{a}\right)^2\right] \leq \mathcal{C}_3 \text{{\rm exp}}\left(T\mathcal{C}_4\right).
\end{equation}
\end{lemma}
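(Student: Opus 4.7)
The plan is to derive two discrete recursions
\[
\mathds{E}[\mathcal{V}_{k+1}^{a}]\le (1+hC_1)\mathds{E}[\mathcal{V}_k^{a}]+hC_2, \qquad \mathds{E}[(\mathcal{V}_{k+1}^{a})^2]\le (1+hC_3)\mathds{E}[(\mathcal{V}_k^{a})^2]+hC_4\mathds{E}[\mathcal{V}_k^{a}]+h^2C_5,
\]
and then apply a discrete Gronwall inequality, exploiting $Lh=T$ to turn the geometric factor $(1+hC)^L$ into $e^{CT}$. The forecast half of the recursion is already displayed in the excerpt just after (\ref{recursiveVf}), so the task is to pass from $\mathcal{V}_k^{f}$ back to $\mathcal{V}_k^{a}$ via the update step (\ref{updateClassical}).

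First I would expand the squared norm in the update using $\|a+b\|^2=\|a\|^2+2\langle a,b\rangle+\|b\|^2$ with $a=X_{t_k}^{(i),f}-\bar{x}_k^{f}$ and $b=K_k(C^{1/2}(\tilde{V}_k^{(i)}-\bar{v}_k)-h(g(X_{t_k}^{(i),f})-\bar{g}_k^{f}))$, sum over $i$, and take expectation. Independence of $(\tilde{V}_k^{(i)})_{i=1,\dots,M}$ from the forecast ensemble, and hence from $K_k$, makes the cross term involving $\tilde{V}_k^{(i)}-\bar{v}_k$ vanish outright, leaving only an $h$-linear cross term $\propto h\sum_i\langle X_{t_k}^{(i),f}-\bar{x}_k^{f},K_k(g(X_{t_k}^{(i),f})-\bar{g}_k^{f})\rangle$, which I would bound by Cauchy--Schwarz together with $\|K_k\|\le c(\mathcal{V}_k^{f})^{1/2}$ from (\ref{estimateKk}) and $\|g(X_{t_k}^{(i),f})-\bar{g}_k^{f}\|\le 2\|g\|_\infty$. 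The quadratic remainder is controlled via $\|K_k\|^2\le c_K\mathcal{V}_k^{f}$ (Lemma \ref{coundCommonCoeff}) and, for the noise part, the independence factorization $\mathds{E}[\|K_k\|^2\mathds{V}_k]=\mathds{E}[\|K_k\|^2]\cdot\mathds{E}[\mathds{V}_k]=O(h)\,\mathds{E}[\mathcal{V}_k^{f}]$ with $\mathds{V}_k:=\frac{1}{M-1}\sum_i\|\tilde{V}_k^{(i)}-\bar{v}_k\|^2$. Feeding the resulting bound $\mathds{E}[\mathcal{V}_k^{a}]\le(1+hA_1)\mathds{E}[\mathcal{V}_k^{f}]+hA_2$ into the already-established forecast bound $\mathds{E}[\mathcal{V}_{k+1}^{f}]\le(1+2h(Lf)_+ +4h^2\|f\|_{\mathrm{Lip}}^2)\mathds{E}[\mathcal{V}_k^{a}]+h\|Q^{1/2}\|^2$ yields the first-moment recursion and hence, by discrete Gronwall, the first claim.

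For the second moment, I would square the pointwise bound on $\mathcal{V}_k^{a}$ produced in the first step and take expectation again. The new contributions are of the form $(\mathcal{V}_k^{f})^2$, $\|K_k\|^4\mathds{V}_k^2$, and $h^4\|K_k\|^4\|g\|_\infty^4$; using $\|K_k\|^4\le c_K^2(\mathcal{V}_k^{f})^2$, the explicit Gaussian moment $\mathds{E}[\mathds{V}_k^2]=O(h^2)$, and independence to factor $\mathds{E}[\|K_k\|^4\mathds{V}_k^2]=\mathds{E}[\|K_k\|^4]\mathds{E}[\mathds{V}_k^2]$, each contribution reduces to $\mathds{E}[(\mathcal{V}_k^{f})^2]$ multiplied by $(1+O(h))$ plus lower-order terms in $h$ and $\mathds{E}[\mathcal{V}_k^{f}]$. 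Squaring (\ref{recursiveVf}) analogously, using Cauchy--Schwarz on cross terms and $\mathds{E}[\mathds{W}_k^2]=O(h^2)$, delivers the matching second-moment forecast bound, and a second discrete Gronwall argument -- with the already-established first-moment estimate substituted into the linear-in-$\mathds{E}[\mathcal{V}_k^{a}]$ term -- produces the second claim.

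The main obstacle is bookkeeping: several competing $O(h^{1/2})$, $O(h)$ and $O(h^2)$ contributions from the Kalman gain, the update-noise, and the Lipschitz cross terms must be arranged so that the coefficients in the Gronwall recursion stay independent of $h$ and no spurious $O(1)$ or $O(h^{-1})$ factor appears. The structural fact that makes this close is precisely the independence of the observation perturbation $\tilde{V}_k^{(i)}$ from the forecast ensemble and from $K_k$: it makes the $\sqrt h$ cross term vanish in expectation and enables the factorization of $\|K_k\|^{2p}\mathds{V}_k^p$ as a product of separate moments, each of the right order in $h$.
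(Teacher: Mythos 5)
Your argument is correct and, for the first moment, coincides with the paper's: both pass from the forecast and update steps to the one-step recursion $\mathds{E}[\mathcal{V}_{k+1}^{a}]\le(1+hC_1(h))\mathds{E}[\mathcal{V}_k^{a}]+hC_2(h)$, using the independence of $\tilde{W}_{k+1}^{(i)},\tilde{V}_{k+1}^{(i)}$ from the current ensemble to annihilate the martingale cross terms in expectation, the bound $\|K_{k+1}\|^2\lesssim \mathcal{V}_k^{a}+\mathds{W}_{k+1}$ from (\ref{Ksquared}) to control the gain, and a discrete Gronwall argument with $Lh=T$. For the second moment the routes differ in organization, though not in substance: you square the one-step pathwise inequality and take expectations at each step, arriving at $\mathds{E}[(\mathcal{V}_{k+1}^{a})^2]\le(1+O(h))\mathds{E}[(\mathcal{V}_k^{a})^2]+O(h)\mathds{E}[\mathcal{V}_k^{a}]+O(h^2)$ followed by a second Gronwall pass, whereas the paper first unrolls the pathwise recursion (\ref{Vkrecursion}) over all $k$ steps, squares the accumulated sum into four blocks, and exploits the orthogonality of martingale increments, $\mathds{E}\bigl[\bigl(\sum_{j\le k}N_j\bigr)^2\bigr]=\sum_{j\le k}\mathds{E}[N_j^2]$, before applying Gronwall once. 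Your version replaces that global orthogonality by a per-step estimate $\mathds{E}[N_{k+1}^2]=O(h)\bigl(\mathds{E}[(\mathcal{V}_k^{a})^2]+\mathds{E}[\mathcal{V}_k^{a}]+h^2\bigr)$ together with the cross terms $\mathds{E}[\mathcal{V}_k^{a}N_{k+1}]$, $\mathds{E}[\mathds{W}_{k+1}N_{k+1}]$ and $\mathds{E}[\|K_{k+1}\|^2\mathds{V}_{k+1}N_{k+1}]$, which vanish or are of higher order in $h$ by the martingale property, odd Gaussian moments and Cauchy--Schwarz; this is slightly more per-step bookkeeping (and should be spelled out, since these cross terms are not in your list of ``new contributions''), but it avoids handling the double sum. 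Both arguments ultimately rest on the same ingredients: $\mathds{E}[\mathds{W}_k^2],\mathds{E}[\mathds{V}_k^2]\le 3h^2$, the fourth-power gain bound $\|K_j\|^4\lesssim(\mathcal{V}_{j-1}^{a})^2+\mathds{W}_j^2$, the independence factorizations, and the substitution of the already-proved first-moment bound into the linear term before the final Gronwall step.
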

\begin{proof} 
Using the Cauchy-Schwarz inequality as well as (\ref{Ksquared}) yields with Algorithm \ref{ClassicalAlgo} the following estimate
\begin{equation}\label{Vkrecursion}
\begin{aligned}
&\mathcal{V}_{k+1}^{a} - \mathcal{V}_k^{a} = \frac{1}{M-1} \sum_{i=1}^M \left\|X_{t_{k+1}}^{(i),a} - \bar{x}_{k+1}^{a}\right\|^2 - \left\|X_{t_k}^{(i),a}-\bar{x}_k^{a}\right\|^2\\
&= \frac{1}{M-1} \sum_{i=1}^M \left \langle \left(X_{t_{k+1}}^{(i),a} - \bar{x}_{k+1}^{a}\right) + \left(X_{t_k}^{(i),a} - \bar{x}_k^{a}\right), \left(X_{t_{k+1}}^{(i),a}-\bar{x}_{k+1}^{a}\right) - \left(X_{t_k}^{(i),a} - \bar{x}_k^{a}\right)\right \rangle\\
&\leq \left(h + 2h(Lf)_{+} + 5h^2\|f\|_{\text{Lip}}^2 + \frac{8h(1+h)M}{M-1}\|g\|_{\infty}^2\tilde{K}_1\tilde{K}_2(h)\right) \mathcal{V}_k^{a}\\
&\hspace{0.5cm} + \left(2+h+\frac{8h(1+h)M}{M-1}\|g\|_{\infty}^2\tilde{K}_1\right)\left\|Q^{\frac{1}{2}}\right\|^2 \mathds{W}_{k+1} + 2\left\|K_{k+1}\right\|^2\left\|C^{\frac{1}{2}}\right\|^2\mathds{V}_{k+1} + 2 N_{k+1}
\end{aligned}
\end{equation}
with
\begin{equation}
\mathds{V}_k := \frac{1}{M-1} \sum_{i=1}^M \left \| \tilde{V}_k^{(i)} - \bar{v}_k\right\|^2
\end{equation}
\noindent
and the martingale
\begin{equation}
\begin{aligned}
&N_{k+1} \\
&:= \frac{1}{M-1} \sum_{i=1}^M \left \langle X_{t_k}^{(i),a} - \bar{x}_k^{a}  + h \left( f\left(X_{t_k}^{(i),a}\right) - \bar{f}_k^{a} \right),  Q^{\frac{1}{2}}\left(\tilde{W}_{k+1}^{(i)} - \bar{w}_{k+1}\right)\right \rangle\\
&\hspace{2.5cm}  + \left \langle X_{t_k}^{(i),a} - \bar{x}_k^{a}  + h \left( f\left(X_{t_k}^{(i),a}\right) - \bar{f}_k^{a} \right)- h  K_{k+1} \left( g\left(X_{t_{k+1}}^{(i),f}\right) - \bar{g}_{k+1}^{f} \right),\right.\\
&\hspace{3.5cm}\left. K_{k+1} C^{\frac{1}{2}}\left( \tilde{V}_{k+1}^{(i)} - \bar{v}_{k+1} \right)\right \rangle\\
&\overset{d}{=} \frac{1}{M-1} \sum_{i=1}^M \int_{t_k}^{t_{k+1}}\left \langle X_{t_k}^{(i),a} - \bar{x}_k^{a}  + h \left( f\left(X_{t_k}^{(i),a}\right) - \bar{f}_k^{a} \right), Q^{\frac{1}{2}}{\rm d}W_s^{(i)}\right \rangle\\
&\hspace{2.15cm} +\int_{t_k}^{t_{k+1}}\left \langle X_{t_k}^{(i),a} - \bar{x}_k^{a}  + h \left( f\left(X_{t_k}^{(i),a}\right) - \bar{f}_{k}^{a} \right) - h  K_{k+1} \left( g\left(X_{t_{k+1}}^{(i),f}\right) - \bar{g}_{k+1}^{f} \right),\right.\\
&\hspace{4cm}\left. K_{k+1} C^{\frac{1}{2}}{\rm d}V_s^{(i)}\right \rangle.\\
\end{aligned}
\end{equation}

\medskip
\noindent
Thus by independence of $\mathds{V}_{k+1}$, $\mathds{W}_{k+1}$ and $\mathcal{V}_k^{a}$ for each $k$ and since $\mathds{E}\left[\mathds{W}_k\right] = h = \mathds{E}\left[\mathds{V}_k\right]$ for each $k$ we obtain
\begin{equation}
\mathds{E}\left[\mathcal{V}_{k+1}^{a}\right] \leq \left(1+hC_1(h)\right)\mathds{E}\left[\mathcal{V}_k^{a}\right] + hC_2(h)
\end{equation}
where
\begin{align}
C_1(h) &:= 1+ 2 (Lf)_{+} + 5h \|f\|_{\text{Lip}}^2 + \frac{8(1+h)M}{M-1} \|g\|_{\infty}^2\tilde{K}_1(h)\tilde{K}_2(h) + 2\left\|C^{\frac{1}{2}}\right\|^2\tilde{K}_3\tilde{K}_4(h),\\
C_2(h) &:= \left\|Q^{\frac{1}{2}}\right\|^2\left(2+h+\frac{8h(1+h)M}{M-1}\|g\|_{\infty}^2\tilde{K}_1 + 2\left\|C^{\frac{1}{2}}\right\|^2\tilde{K}_3(h)\right).
\end{align}
\noindent
Therefore by a Gronwall argument
\begin{equation}\label{gronwallEstVka}
\mathds{E}\left[\mathcal{V}_k^{a}\right] \leq \left(1+hC_1(h)\right)^k\mathds{E}\left[\mathcal{V}_0^{a}\right] + \sum_{j=0}^{k-1} \left(1+hC_1(h)\right)^j hC_2(h)
\end{equation}
and since $h = \frac{T}{L}$, we achieve for $h \ll 1$ the final estimate
\begin{equation}
\mathds{E}\left[\mathcal{V}_k^{a}\right] \leq e^{TC_1(1)}\left(\mathds{E}\left[\mathcal{V}_0^{a}\right] + TC_2(1)\right)
\end{equation}
for all $1\leq k \leq L$. Thus $\mathds{E}\left[\mathcal{V}_k^{a}\right]$ is bounded for all $1\leq k \leq L$ since $\mathds{E}\left[\mathcal{V}_0^{a}\right] < \infty$.\\

\medskip
\noindent
Now consider the result of applying a Gronwall argument to Equation (\ref{Vkrecursion}): introduce
\begin{align}
\tilde{C}_1(h) &:= 1+2(Lf)_{+}+5h\|f\|_{\text{Lip}}^2 + \frac{8(1+h)M}{M-1}\|g\|_{\infty}^2\tilde{K}_1\tilde{K}_2(h),\\
\tilde{C}_2(h) &:= \left\|Q^{\frac{1}{2}}\right\|^2\left(2+h+\frac{8h(1+h)M}{M-1}\|g\|_{\infty}^2\tilde{K}_1\right),
\end{align}
then
\begin{equation}
\mathcal{V}_k^{a} \leq \left(1+h\tilde{C}_1(h)\right)^k\mathcal{V}_0^{a} + \sum_{j=1}^k \left(1+h\tilde{C}_1(h)\right)^{k-j}\left(\tilde{C}_2(h)\mathds{W}_j + 2\left\|K_j\right\|^2\left\|C^{\frac{1}{2}}\right\|^2\mathds{V}_j + 2N_j\right).
\end{equation}
By the Cauchy-Schwarz inequality we obtain
\begin{equation}
\begin{aligned}
\left(\mathcal{V}_k^{a}\right)^2 &\leq 4\left(1+h\tilde{C}_1(h)\right)^{2k}\left(\mathcal{V}_0^{a}\right)^2+ 4 \left(\sum_{j=1}^k \left(1+h\tilde{C}_1(h)\right)^{k-j}\tilde{C}_2(h)\mathds{W}_j \right)^2\\
&\hspace{0.5cm} + 16 \left(\sum_{j=1}^k \left(1+h\tilde{C}_1(h)\right)^{k-j}\left\|K_j\right\|^2\left\|C^{\frac{1}{2}}\right\|^2\mathds{V}_j\right)^2+ 16 \left(\sum_{j=1}^k \left(1+h\tilde{C}_1(h)\right)^{k-j}N_j\right)^2\\
&=: (I) + (II) + (III) + (IV).
\end{aligned}
\end{equation}
Again we estimate for $h\ll 1$
\begin{equation}
(1+h\tilde{C}_1(h))^{k} \leq e^{T\tilde{C}_1(1)}
\end{equation}
which gives
\begin{equation}
\mathds{E}\left[(I)\right] \leq e^{2T\tilde{C}_1(1)}\mathds{E}\left[\left(\mathcal{V}_0^{a}\right)^2\right].
\end{equation}
Since $\tilde{W}_k^{(i)} \sim \mathcal{N}(0, h{\rm Id})$ this yields by (\ref{quadVariationNoiseW})
\begin{equation}
\mathds{E}\left[\left\|\tilde{W}_k^{(i)}-\bar{w}_k\right\|^4\right] = 3\left(\frac{M-1}{M}h\right)^2
\end{equation}
and thus $\mathds{E}\left[\left(\mathds{W}_k\right)^2\right] \leq 3h^2$, therefore we obtain
\begin{equation}
\mathds{E}\left[(II)\right] \leq 12The^{2T\tilde{C}_1(1)}\tilde{C}_2(1)^2.
\end{equation}
For $(III)$ we use that also $\mathds{E}\left[\left(\mathds{V}_k\right)^2\right] \leq 3h^2$ and by (\ref{Ksquared})
\begin{equation}
\left\|K_j\right\|^4 \leq 2\tilde{K}_1^2\left(\tilde{K}_2(h)^2\left(\mathcal{V}_{j-1}^{a}\right)^2 + \left\|Q^{\frac{1}{2}}\right\|^4\left(\mathds{W}_j\right)^2\right).
\end{equation}
Thus by independence of $K_j$ and $\mathds{V}_j$ for any $j$ this yields
\begin{equation}
\begin{aligned}
\mathds{E}\left[(III)\right] &\leq 96Th\left\|C^{\frac{1}{2}}\right\|^4e^{2T\tilde{C}_1(1)}\tilde{K}_1^2\left(\tilde{K}_2(h)^2\sum_{j=1}^k \mathds{E}\left[\left(\mathcal{V}_{j-1}^{a}\right)^2\right]+3Th\left\|Q^{\frac{1}{2}}\right\|^4\right).
\end{aligned}
\end{equation}
Now on $(IV)$: recall that $\left(\tilde{W}_k^{(i)}\right)$ and $\left(\tilde{V}_k^{(i)}\right)$ are i.i.d. sequences independent of each other. Then by independence of Brownian increments, the It\^{o} isometry as well as (\ref{Ksquared}) it holds
\begin{equation}
\begin{aligned}
&\mathds{E}\left[\left(\sum_{j=1}^k N_j\right)^2\right] = \sum_{j=1}^k \mathds{E}\left[\left(N_j\right)^2\right]\\
&= \sum_{j=1}^k \frac{1}{(M-1)^2} \\
&\hspace{1cm} \sum_{i=1}^M \mathds{E}\left[\left(\int_0^{t_j} \left \langle Q^{\frac{T}{2}}\left( X_{t_{j-1}}^{(i),a} - \bar{x}_{j-1}^{a}  + h \left( f\left(X_{t_{j-1}}^{(i),a}\right) - \bar{f}_{j-1}^{a} \right)\right)\mathds{1}_{\{s\in[t_{j-1},t_j]\}}, {\rm d}W_s^{(i)}\right \rangle\right)^2\right]\\
&\hspace{1.5cm}+ \mathds{E}\left[\left(\int_0^{t_j} \left \langle C^{\frac{T}{2}}K_j^T\left(X_{t_{j-1}}^{(i),a} - \bar{x}_{j-1}^{a}  + h \left( f\left(X_{t_{j-1}}^{(i),a}\right) - \bar{f}_{j-1}^{a} \right)\right.\right.\right.\right.\\
&\hspace{6.5cm}\left.\left.\left.\left. - h  K_{j} \left( g\left(X_{t_{j}}^{(i),f}\right) - \bar{g}_{j}^{f} \right)\right)\mathds{1}_{\{s\in[t_{j-1},t_j]\}},{\rm d}V_s^{(i)}\right \rangle\right)^2\right]\\
\end{aligned}
\end{equation}
\begin{equation*}
\begin{aligned}
&\leq \sum_{j=1}^k\frac{h}{M-1}\left(\left\|Q^{\frac{1}{2}}\right\|^2\left(1+2h(Lf)_{+}+4h^2\|f\|_{\text{Lip}}^2\right)\mathds{E}\left[\mathcal{V}_{j-1}^{a}\right]\right.\\
&\hspace{3cm}\left. + 2\left\|C^{\frac{1}{2}}\right\|^2\left(\left(1+2h(Lf)_{+}+4h^2\|f\|_{\text{Lip}}^2\right)\mathds{E}\left[\left\|K_j\right\|^2\mathcal{V}_{j-1}^{a}\right]\right.\right.\\
&\hspace{9cm}\left.\left. + 4h^2\|g\|_{\infty}^2\frac{M}{M-1}\mathds{E}\left[\left\|K_j\right\|^4\right]\right)\right)\\
&\leq \sum_{j=0}^{k-1}\frac{h}{M-1} \\
&\hspace{1.25cm} \left(2\left\|C^{\frac{1}{2}}\right\|^2\tilde{K}_1^2\tilde{K}_2(h)^2\left(1+2h(Lf)_{+}+4h^2\|f\|_{\text{Lip}}^2+\frac{8h^2\|g\|_{\infty}^2M}{M-1}\right)\mathds{E}\left[\left(\mathcal{V}_j^{a}\right)^2\right]\right.\\
&\hspace{1.5cm} + \left\|Q^{\frac{1}{2}}\right\|^2\left(1+2h(Lf)_{+}+4h^2\|f\|_{\text{Lip}}^2\right)\left(1+2\left\|C^{\frac{1}{2}}\right\|^2\tilde{K}_1h\right)\\
&\hspace{1.5cm}\left. + 48h^4\left\|C^{\frac{1}{2}}\right\|^2\|g\|_{\infty}^2\tilde{K}_1\left\|Q^{\frac{1}{2}}\right\|^4\frac{M}{M-1}\right).
\end{aligned}
\end{equation*}
Thus by the Cauchy-Schwarz inequality we obtain an estimate of the form
\begin{equation}
\mathds{E}\left[\left(\mathcal{V}_k^{a}\right)^2\right] \leq \widehat{C}_1(h) + h\widehat{C}_2(h)\sum_{j=0}^{k-1} \mathds{E}\left[\left(\mathcal{V}_j^{a}\right)^2\right]
\end{equation}
which for $h\ll 1$ and by a Gronwall argument leads to
\begin{equation}
\mathds{E}\left[\left(\mathcal{V}_k^{a}\right)^2\right] \leq \widehat{C}_1(1)\text{exp}\left(T\widehat{C}_2(1)\right)
\end{equation}
and this concludes the proof.
\end{proof}

\subsection{Modified formulation}\label{appendixVkModified}

\begin{lemma}\label{boundVkaModified}
If for the smallest eigenvalue of $P_0^{a}$ it holds
\begin{equation}
\lambda^{\min}\left(P_0^{a}\right) > 0,
\end{equation}
then there exists a constant $p_T^{*} > 0$ such that
\begin{equation}
\left\|\left(P_k^{a}\right)^{-1}\right\| \leq \frac{1}{p_T^{*}}
\end{equation}
and constants $v_T^{a,*}, v_T^{f,*}>0$ (depending on $\lambda^{\min}\left(P_0^{a}\right)$ but independent of $h$) such that
\begin{equation}
\mathcal{V}_k^{a} \leq v_T^{a,*} \text{ and } \mathcal{V}_k^{f} \leq v_T^{f,*} \hspace{1cm}\text{(see (\ref{boundVka}) and (\ref{boundVkf}))}
\end{equation}
for all $1 \leq k \leq L$.
\end{lemma}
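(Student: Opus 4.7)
The approach is to mirror, in discrete time, the arguments of Lemma \ref{boundVtModified} and Lemma \ref{boundLambdaModified}, exploiting the fact that Algorithm \ref{ModifiedAlgorithm} is deterministic so that the recursions for $P_k^f$, $P_k^a$, $\mathcal{V}_k^f$, $\mathcal{V}_k^a$ are direct discretizations of the continuous-time dynamics. The bookkeeping is simplified by the identity $\mathcal{V}_k^{a/f} = \text{tr}(P_k^{a/f})$, which reduces the whole analysis to controlling matrix recursions for $P_k^a$ and $P_k^f$. Because the forecast step explicitly involves $(P_{k-1}^a)^{-1}$, the bounds on $\mathcal{V}_k^a$ and on $\lambda^{\min}(P_k^a)$ are coupled, and I would carry them through a simultaneous induction on $k$.

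Writing $E_k^f = A_{k-1} E_{k-1}^a + h F_{k-1}^a$ with $A_{k-1} := \mathrm{Id} + \frac{h}{2} Q (P_{k-1}^a)^{-1}$ and $F_{k-1}^a := [f(X_{t_{k-1}}^{(i),a}) - \bar{f}_{k-1}^a]_{i}$, the algebraic identity $A_{k-1} P_{k-1}^a A_{k-1}^T = P_{k-1}^a + hQ + \frac{h^2}{4} Q (P_{k-1}^a)^{-1} Q$ yields a recursion of the form
\[ P_k^f = P_{k-1}^a + hQ + h S_{k-1} + h^2 R_{k-1}^f, \]
where $S_{k-1}$ is the symmetrization of $\frac{1}{M-1} E_{k-1}^a (F_{k-1}^a)^T$ and $R_{k-1}^f$ is a remainder controlled by $\|Q\|$, $\|f\|_{\text{Lip}}$, and $\|(P_{k-1}^a)^{-1}\|$. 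For the update step $E_k^a = E_k^f - \frac{h}{2} K_k \mathcal{G}_k^f$, combining with the gain identity $\frac{1}{M-1} E_k^f (\mathcal{G}_k^f)^T = K_k C + \frac{h}{M-1} K_k \mathcal{G}_k^f (\mathcal{G}_k^f)^T$ produces
\[ P_k^a = P_k^f - h K_k C K_k^T - \tfrac{3h^2}{4(M-1)} K_k \mathcal{G}_k^f (\mathcal{G}_k^f)^T K_k^T, \]
so $P_k^a$ differs from $P_k^f$ by subtraction of a positive semidefinite matrix.

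Taking traces, using $\text{tr}(S_{k-1}) \leq 2 (Lf)_+ \mathcal{V}_{k-1}^a$ exactly as in Lemma \ref{boundVtModified} and dropping the non-positive update contribution, one obtains
\[ \mathcal{V}_k^a \leq (1 + 2h (Lf)_+) \mathcal{V}_{k-1}^a + h\, \text{tr}(Q) + h^2 \varepsilon_{k-1}, \]
with $\varepsilon_{k-1}$ bounded under the inductive hypothesis by $\mathcal{V}_{k-1}^a$, $\mathcal{V}_{k-1}^f$ and $\|(P_{k-1}^a)^{-1}\|$. A discrete Gronwall argument then gives a uniform bound $\mathcal{V}_k^a \leq v_T^{a,*}$ for $h$ small enough, and the analogous argument applied to the forecast recursion yields $\mathcal{V}_k^f \leq v_T^{f,*}$. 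For the eigenvalue bound, testing the matrix recursion against an arbitrary unit vector $v$ and arguing as in the diagonalization step of Lemma \ref{boundLambdaModified}, using $\|K_k\|^2 \leq C_2 v_T^{f,*}$ from Lemma \ref{coundCommonCoeff} together with Young's inequality applied to the Lipschitz cross term, produces the scalar recursion
\[ \langle P_k^a v, v \rangle \geq (1 - \alpha_\varepsilon h) \langle P_{k-1}^a v, v \rangle + h q_\varepsilon - O(h^2), \]
with $\varepsilon > 0$ chosen so that $q_\varepsilon > 0$. Taking the infimum over unit $v$ and iterating yields $\lambda^{\min}(P_k^a) \geq p_T^*$ of the form $p_T^* = \frac{1}{2}\lambda^{\min}(P_0^a)\, e^{-\alpha_\varepsilon T}$, valid for $h$ below an explicit threshold.

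The main obstacle is closing the circularity between these bounds: the $O(h^2)$ remainder $\varepsilon_{k-1}$ appearing in the $\mathcal{V}_k^a$ recursion depends on $\|(P_{k-1}^a)^{-1}\|$, while the eigenvalue recursion depends on $\mathcal{V}_k^f$. I would resolve this by fixing candidate target constants $v_T^{a,*}, v_T^{f,*}, p_T^*$ slightly above the continuous-time bounds given in Lemmas \ref{boundVtModified} and \ref{boundLambdaModified}, and then showing by simultaneous induction on $k$ that there exists a threshold $h_0 > 0$, depending only on $T$ and on these target constants, such that for all $h < h_0$ each of the three recursions at step $k$ reproduces its bound once the bounds at step $k-1$ are assumed. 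The explicit values of $v_T^{a,*}$, $v_T^{f,*}$ obtained in this way give the constants announced in the statement, and $p_T^*$ is determined by the eigenvalue recursion.
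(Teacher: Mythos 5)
Your overall architecture is the right one and is essentially the paper's: set up discrete recursions for $P_k^f$, $P_k^a$ (equivalently $\mathcal{V}_k^f$, $\mathcal{V}_k^a$ via the trace) and for $\langle P_k^a v,v\rangle$, observe that the $\mathcal{V}$-recursions need $\|(P_{k-1}^a)^{-1}\|$ while the eigenvalue recursion needs the $\mathcal{V}$-bounds, and close the loop by a simultaneous induction in $k$ for $h$ below a threshold. Your exact algebraic identities are correct and in fact cleaner than the paper's estimates: in particular $P_k^a = P_k^f - hK_kCK_k^T - \tfrac{3h^2}{4(M-1)}K_k\mathcal{G}_k^f(\mathcal{G}_k^f)^TK_k^T$ immediately gives $\mathcal{V}_k^a\leq\mathcal{V}_k^f$, whereas the paper pays a factor $\bigl(1+O(h^2)\bigr)$ there. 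The paper also organizes the circularity differently: it first runs the whole argument under the weak induction hypothesis $\lambda^{\min}(P_k^a)\geq Dh$ (which suffices because $\|(P_k^a)^{-1}\|$ only ever appears multiplied by $h^2$) and upgrades to the uniform bound $p_T^*$ only at the very end; your plan of carrying a uniform $p^*$ from the start is admissible and arguably tidier.

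There is, however, one step that fails as written. In the eigenvalue recursion you estimate the update contribution by $\langle K_kCK_k^Tv,v\rangle\leq\|C\|\,\|K_k\|^2\leq\|C\|\,C_2\,v_T^{f,*}$, which produces a constant loss $-h\|C\|C_2v_T^{f,*}$ in the inhomogeneous term. Your claimed $q_\varepsilon>0$ then requires $\lambda^{\min}(Q)>\|C\|C_2v_T^{f,*}$, a smallness condition on the data that is not among the hypotheses (and cannot be arranged by shrinking $\varepsilon$ or $h$, since that term carries no free parameter). The fix is to keep the Cauchy--Schwarz structure of $K_k^Tv=(C+\tfrac{h}{M-1}\mathcal{G}_k^f(\mathcal{G}_k^f)^T)^{-1}\tfrac{1}{M-1}\mathcal{G}_k^f(E_k^f)^Tv$, which gives $\langle K_kCK_k^Tv,v\rangle\leq \text{const}\cdot\langle P_k^fv,v\rangle$, so that the loss is absorbed into the contraction factor $(1-\alpha_\varepsilon h)$ rather than into $q_\varepsilon$; this is exactly what the paper's Young-inequality treatment of the cross term achieves. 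A second, more minor point: your closing description of the induction — ``each recursion at step $k$ reproduces its bound from step $k-1$'' — cannot hold with the flat constant $v_T^{a,*}$, since one step of $\mathcal{V}_k^a\leq(1+2h(Lf)_+)\mathcal{V}_{k-1}^a+h\,\mathrm{tr}(Q)+h^2\varepsilon_{k-1}$ strictly increases a saturated bound; the induction hypothesis for $\mathcal{V}$ must be the $k$-dependent discrete Gronwall partial bound (the paper's ansatz (\ref{VaAnsatz})), with $v_T^{a,*}$ its value at $k=L$. Both issues are repairable without changing your strategy.
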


\begin{proof}
First of all note that $\left\|\left(P_k^{a}\right)^{-1}\right\| = \left(\lambda^{\min}\left(P_k^{a}\right)\right)^{-1}$ where $\lambda^{\min}\left(P_k^{a}\right)$, the smallest eigenvalue of $P_k^{a}$, satisfies
\begin{equation}
\lambda^{\min}\left(P_k^{a}\right) = \inf_{v,\|v\|=1} \left \langle P_k^{a} v, v \right \rangle.
\end{equation}

\noindent
Thus let $v \in \mathds{R}^d$ with $\|v\|=1$. By the following recursive formula
\begin{equation}
\begin{aligned}
P_{k+1}^{f} &= \frac{1}{M-1} \sum_{i=1}^M \left(X_{t_{k+1}}^{(i),f} - \bar{x}_{k+1}^{f}\right)\left(X_{t_{k+1}}^{(i),f} - \bar{x}_{k+1}^{f}\right)^T\\
&= P_k^{a}+ h \left(\frac{1}{M-1}\sum_{i=1}^M \left(X_{t_k}^{(i),a} - \bar{x}_k^{a}\right)\left(f\left(X_{t_k}^{(i),a}\right) - f\left(\bar{x}_k^{a}\right)\right)^T\right.\\
&\hspace{4cm}\left. + \left(f\left(X_{t_k}^{(i),a}\right)-f\left(\bar{x}_k^{a}\right)\right)\left(X_{t_k}^{(i),a}-\bar{x}_k^{a}\right)^T\right) + hQ\\
&\hspace{0.5cm} + h^2 \frac{1}{M-1} \sum_{i=1}^M \left(f\left(X_{t_k}^{(i),a}\right) - \bar{f}_k^{a} + \frac{1}{2}Q\left(P_k^{a}\right)^{-1}\left(X_{t_k}^{(i),a}-\bar{x}_k^{a}\right)\right)\\
&\hspace{4.5cm}\left(f\left(X_{t_k}^{(i),a}\right) - \bar{f}_k^{a} + \frac{1}{2}Q\left(P_k^{a}\right)^{-1}\left(X_{t_k}^{(i),a}-\bar{x}_k^{a}\right)\right)^T,
\end{aligned}
\end{equation}
we obtain by the Young's inequality for some $\epsilon >0$
\begin{equation}\label{Pf}
\begin{aligned}
&\left \langle P_{k+1}^{f} v, v\right \rangle\\
&\geq \left \langle P_k^{a} v, v\right \rangle + 2h \frac{1}{M-1} \sum_{i=1}^M \left \langle f\left(X_{t_k}^{(i),a}\right) - f\left(\bar{x}_k^{a}\right),v\right \rangle \left \langle X_{t_k}^{(i),a} - \bar{x}_k^{a}, v \right \rangle + h \left \langle Q v, v\right \rangle\\
&\geq \left \langle P_k^{a} v, v \right \rangle - 2h \|f\|_{\text{Lip}} \left(\mathcal{V}_k^{a}\right)^{\frac{1}{2}} \left(\left\langle P_k^{a}v,v\right \rangle\right)^{\frac{1}{2}} + h \left \langle Qv,v\right \rangle\\
&\geq \left(1-\frac{h}{\epsilon}\right) \left\langle P_k^{a} v, v \right \rangle + h \left(\lambda^{\min}(Q) - \epsilon\|f\|_{\text{Lip}}^2\mathcal{V}_k^{a}\right).
\end{aligned}
\end{equation}

\medskip
\noindent
By the specific structure of the Kalman gain matrix we obtain the following recursive form for the analyzed ensemble covariance matrix
\begin{equation}
\begin{aligned}
P_{k+1}^{a} &=\frac{1}{M-1} \sum_{i=1}^M \left(X_{t_{k+1}}^{(i),a}- \bar{x}_{k+1}^{a}\right)\left(X_{t_{k+1}}^{(i),a}-\bar{x}_{k+1}^{a}\right)^T\\
&= P_{k+1}^{f} - h \frac{1}{M-1}\sum_{i=1}^M \left(X_{t_{k+1}}^{(i),f} - \bar{x}_{k+1}^{f}\right)\left(g\left(X_{t_{k+1}}^{(i),f}\right) - \bar{g}_{k+1}^{f}\right)^TK_{k+1}^T\\
&\hspace{0.5cm} +\frac{h^2}{4} \frac{1}{M-1}\sum_{i=1}^M K_{k+1}\left(g\left(X_{t_{k+1}}^{(i),f}\right) - \bar{g}_{k+1}^{f}\right)\left(g\left(X_{t_{k+1}}^{(i),f}\right)-\bar{g}_{k+1}^{f}\right)^TK_{k+1}^T
\end{aligned}
\end{equation}
\newpage
\noindent
thus
\begin{equation}\label{Pa}
\begin{aligned}
\left \langle P_{k+1}^{a} v, v\right \rangle &\geq \left \langle P_{k+1}^{f} v, v \right \rangle\\
&\hspace{0.5cm} -  \frac{h}{M-1}\sum_{i=1}^M \left\langle X_{t_{k+1}}^{(i),f} - \bar{x}_{k+1}^{f},v\right\rangle \left \langle K_{k+1}\left(g\left(X_{t_{k+1}}^{(i),f}\right) - \bar{g}_{k+1}^{f}\right),v\right \rangle\\
&\geq \left \langle P_{k+1}^{f} v, v \right \rangle\\
&\hspace{0.5cm} - h \left(\left \langle P_{k+1}^{f} v, v \right\rangle\right)^{\frac{1}{2}}\left(\frac{1}{M-1}\sum_{i=1}^M \left \langle K_{k+1}\left(g\left(X_{t_{k+1}}^{(i),f}\right) - \bar{g}_{k+1}^{f}\right),v\right \rangle^2\right)^{\frac{1}{2}}\\
&\geq \left(1-\frac{h}{\epsilon}\right)\left\langle P_{k+1}^{f} v, v\right \rangle - h\epsilon\frac{16\|g\|_{\infty}^4\left\|C^{-1}\right\|^2M^2}{(M-1)^2} \mathcal{V}_{k+1}^{f}.
\end{aligned}
\end{equation}

\medskip
\noindent
Further we obtain with Algorithm \ref{ModifiedAlgorithm}
\begin{equation}\label{Vkf}
\begin{aligned}
\mathcal{V}_{k+1}^{f} &= \frac{1}{M-1} \sum_{i=1}^M \left\|X_{t_{k+1}}^{(i),f} - \bar{x}_{k+1}^{f}\right\|^2\\
&\leq \left(1+2h(Lf)_{+}+4h^2\|f\|_{\text{Lip}}^2+h^2\|f\|_{\text{Lip}}\|Q\|\left\|\left(P_k^{a}\right)^{-1}\right\|\right) \mathcal{V}_k^{a}\\
&\hspace{0.5cm} + h\text{tr}(Q) + \frac{h^2}{4} \left\|\left(P_k^{a}\right)^{-1}\right\|\text{tr}\left(Q^2\right)
\end{aligned}
\end{equation}
and
\begin{equation}
\mathcal{V}_{k+1}^{a} = \frac{1}{M-1}\sum_{i=1}^M \left\|X_{t_{k+1}}^{(i),a} - \bar{x}_{k+1}^{a}\right\|^2\leq \left(1+h^2\frac{16\|g\|_{\infty}^4\left\|C^{-1}\right\|^2M^2}{(M-1)^2}\right) \mathcal{V}_{k+1}^{f}.
\end{equation}

\medskip
\noindent
Now assume that there exists a constant $D > 0$ such that
\begin{equation}\label{assumpLamMin}
\lambda^{\min}\left(P_k^{a}\right) \geq Dh.
\end{equation}
Then
\begin{equation}
\begin{aligned}
\mathcal{V}_{k+1}^{a} &\leq \text{exp}\left(hC^{(1)}(h,D)\right)\left(\mathcal{V}_k^{a}+hC^{(2)}(D)\right)
\end{aligned}
\end{equation}
where
\begin{align}
C^{(1)}(h,D) &:= h\frac{16\|g\|_{\infty}^4\left\|C^{-1}\right\|^2M^2}{(M-1)^2}+ 2(Lf)_{+}+4h\|f\|_{\text{Lip}}^2+\frac{\|f\|_{\text{Lip}}\|Q\|}{D},\\
C^{(2)}(D) &:= \text{tr}(Q) + \frac{\text{tr}\left(Q^2\right)}{4D}.
\end{align}
We make the ansatz
\begin{equation}\label{VaAnsatz}
\mathcal{V}_k^{a}\leq \sum_{l=0}^{k-1} \text{exp}\left(hC^{(1)}(h,D)(k-l)\right)hC^{(2)}(D).
\end{equation}
Then
\begin{equation}
\mathcal{V}_{k+1}^{a} \leq \sum_{l=0}^{k} \text{exp}\left(hC^{(1)}(h,D)((k+1)-l)\right)hC^{(2)}(D)
\end{equation}
and for $h \ll 1$
\begin{equation}\label{boundVka}
\mathcal{V}_{k+1}^{a} \leq C^{(2)}(D)\int_0^T \text{e}^{C^{(1)}(1,D)(T-s)}{\rm d}s =: v_{D,T}^{a,*}.
\end{equation}
Using (\ref{Vkf}) in combination with Assumption (\ref{assumpLamMin}) and the uniform bound on $\mathcal{V}_k^{a}$, we also obtain the existence of a constant $v_{D,T}^{f,*}$ such that
\begin{equation}\label{boundVkf}
\mathcal{V}_{k+1}^{f} \leq v_{D,T}^{f,*}.
\end{equation}
This yields with Equations (\ref{Pf}) and (\ref{Pa})
\begin{equation}
\begin{aligned}
\left \langle P_{k+1}^{a} v, v\right \rangle &\geq \left(1-\frac{h}{\epsilon}\right)^2 \left \langle P_k^{a} v,v\right\rangle\\
&\hspace{0.5cm} + h \left( \left(1-\frac{h}{\epsilon}\right)\lambda^{\min}(Q) - \epsilon\left(\|f\|_{\text{Lip}}^2v_{D,T}^{a,*} + \frac{16\|g\|_{\infty}^4\left\|C^{-1}\right\|^2M^2}{(M-1)^2} v_{D,T}^{f,*}\right)\right).
\end{aligned}
\end{equation}
Choose $\epsilon >0$ such that for some $\hat{h} > 0$
\begin{equation}
\frac{h}{\epsilon}\lambda^{\min}(Q) + \epsilon\left(\|f\|_{\text{Lip}}^2v_{D,T}^{a,*} + \frac{16\|g\|_{\infty}^4\left\|C^{-1}\right\|^2M^2}{(M-1)^2} v_{D,T}^{f,*}\right) \leq \frac{\lambda^{\min}(Q)}{2}
\end{equation}
for all $h < \hat{h}$, then
\begin{equation}
\begin{aligned}
\left \langle P_{k+1}^{a} v, v\right \rangle &\geq \left(1-\frac{h}{\epsilon}\right)^2\left \langle P_k^{a} v, v\right \rangle + \frac{h}{2}\lambda^{\min}(Q) \geq \left(\left(1-\frac{h}{\epsilon}\right)^2 + \frac{\lambda^{\min}(Q)}{2D}\right)Dh.
\end{aligned}
\end{equation}
Observe that
\begin{equation}
\lim_{h\longrightarrow 0} \left(1-\frac{h}{\epsilon}\right)^2 + \frac{\lambda^{\min}(Q)}{2D} = 1 + \frac{\lambda^{\min}(Q)}{2D} > 1,
\end{equation}
thus there exists an $\tilde{h} >0$ such that for all $h < \tilde{h}$ it holds
\begin{equation}
\left(1-\frac{h}{\epsilon}\right)^2 + \frac{\lambda^{\min}(Q)}{2D} = 1 + \frac{\lambda^{\min}(Q)}{2D} \geq 1
\end{equation}
and $\left \langle P_{k+1}^{a} v, v \right \rangle \geq Dh$. Since $v$ was chosen arbitrarily we may thus conclude
\begin{equation}
\lambda^{\min}\left(P_{k+1}^{a}\right) \geq Dh.
\end{equation}
In total this yields that if
\begin{equation}
\lambda^{\min}\left(P_0^{a}\right) \geq Dh,
\end{equation}
then
\begin{equation}
\lambda^{\min}\left(P_k^{a}\right) \geq Dh
\end{equation}
for all $1 \leq k \leq L$ and $h < h^*$ with $h^* >0$ small enough.

\medskip
\noindent
Further it holds
\begin{equation}
\left \langle P_{k+1}^{a} v, v\right \rangle\geq \left(1-\frac{h}{\epsilon}\right)^{2(k+1)}\left \langle P_0^{a}v,v\right \rangle + \frac{\lambda^{\min}(Q)}{2}\sum_{l=0}^k \left(1-\frac{h}{\epsilon}\right)^{2(k-l)}h.
\end{equation}
Observe that for some $h_0 < \epsilon$ there exists a $\beta > 0$ such that
\begin{equation}
1-\frac{h}{\epsilon} \geq \text{exp}\left(-\left(\frac{1}{\epsilon}+\beta\right)h\right)
\end{equation}
for all $h < h_0$. Thus
\begin{equation}
\begin{aligned}
&\left \langle P_{k+1}^{a} v, v \right \rangle\\
&\geq \text{exp}\left(-2\left(\frac{1}{\epsilon}+\beta\right)h(k+1)\right)\left \langle P_0^{a} v, v\right \rangle + \frac{\lambda^{\min}(Q)}{2}\sum_{l=0}^k \text{exp}\left(-2\left(\frac{1}{\epsilon}+\beta\right)(k-l)h\right)h\\
&\geq \text{exp}\left(-2\left(\frac{1}{\epsilon}+\beta\right)T\right)\left \langle P_0^{a}v,v\right \rangle + \frac{\lambda^{\min}(Q)}{2}\int_0^{kh}\text{exp}\left(-2\left(\frac{1}{\epsilon}+\beta\right)(kh-s)\right){\rm d}s\\
&\geq \text{exp}\left(-2\left(\frac{1}{\epsilon}+\beta\right)T\right)\left \langle P_0^{a}v,v\right \rangle.
\end{aligned}
\end{equation}
Therefore if $h$ is small enough and $\lambda^{\min}\left(P_0^{a}\right) > 0$, then
\begin{equation}
\lambda^{\min}\left(P_k^{a}\right) \geq \text{exp}\left(-2\left(\frac{1}{\epsilon}+\beta\right)T\right)\lambda^{\min}\left(P_0^{a}\right) > 0
\end{equation}
and hence there exists a constant $p_T^{*} > 0$ such that
\begin{equation}
\left\|\left(P_k^{a}\right)^{-1}\right\| \leq \frac{1}{p_T^{*}}
\end{equation}
for all $1 \leq k \leq L$.
\end{proof}
\end{document}